\newcounter{itemcounter}
\numberwithin{itemcounter}{section}
\newtheorem{thm}[itemcounter]{Theorem}
\newtheorem{lem}[itemcounter]{Lemma}
\newtheorem{defi}[itemcounter]{Definition}
\newtheorem{prop}[itemcounter]{Proposition}
\newtheorem{cor}[itemcounter]{Corollary}
\newtheorem{rem}[itemcounter]{Remark}
\newtheorem*{thm*}{Theorem}
\newtheorem*{cor*}{Corollary}
\newcommand{\NN} {\mathbb{N}}
\newcommand{\cO} {\mathcal{O}}
\newcommand{\Irr}{\rm Irr}
\newcommand{\IBr}{\rm IBr}
\newcommand{\Aut}{\rm Aut}
\newcommand{\rk}{\rm rk}
\newcommand{\Out}{\rm Out}
\newcommand{\CF}{\rm CF}
\newcommand{\prj}{\rm prj}
\newcommand{\Ind}{\rm Ind}
\newcommand{\Res}{\rm Res}
\title{Classifying blocks with abelian defect groups of rank $3$ for the prime $2$ \footnote{This research was supported by the EPSRC (grant no. EP/M015548/1).}}
\author{Charles Eaton\footnote{School of Mathematics, University of Manchester, Manchester, M13 9PL, United Kingdom. Email: charles.eaton@manchester.ac.uk} and Michael Livesey\footnote{School of Mathematics, University of Manchester, Manchester, M13 9PL, United Kingdom. Email: michael.livesey@manchester.ac.uk}}
\date{13th October 2017}
\begin{document}
\maketitle

\begin{abstract}
In this paper we classify all blocks with defect group $C_{2^n}\times C_2\times C_2$ up to Morita equivalence. Together with a recent paper of Wu, Zhang and Zhou, this completes the classification of Morita equivalence classes of $2$-blocks with abelian defect groups of rank at most $3$. The classification holds for blocks over a suitable discrete valuation ring as well as for those over an algebraically closed field. The case considered in this paper is significant because it involves comparison of Morita equivalence classes between a group and a normal subgroup of index $2$, so requires novel reduction techniques which we hope will be of wider interest. We note that this also completes the classification of blocks with abelian defect groups of order dividing $16$ up to Morita equivalence. A consequence is that Broue's abelian defect group conjecture holds for all blocks mentioned above.
\end{abstract}

\section{Introduction}

Let $p$ be a prime and $(K,\mathcal{O},k)$ a $p$-modular with $k$ algebraically closed. Let $P$ be a finite $p$-group. Donovan's conjecture states that there are only finitely many Morita equivalence classes amongst blocks of $kG$ for finite groups $G$ with defect groups isomorphic to $P$, and it is natural to strengthen this conjecture to blocks with respect to $\cO$. Advances in our understanding of blocks of finite groups of Lie type in non-defining characteristic open the possibility of using the classification of finite simple groups to tackle this conjecture and further to classify Morita equivalence classes of blocks. For $p=2$ this process has been started in~\cite{ekks14}, where Donovan's conjecture (with respect to $k$) has been proved for all elementary abelian $2$-groups. For elementary abelian $2$-groups of order at most $16$ the Morita equivalence classes have further been classified, with respect to $\cO$ (see~\cite{ea17}). Abelian $p$-groups with a cyclic factor of order strictly great than $p$ present a significant problem to the extension of these results. This is because the case of a group generated by a normal subgroup and a defect group is especially difficult to to study with respect to Morita equivalence, and required the application of~\cite{kk96} which applies only to split extensions by a factor of the defect group, and further only to blocks defined over $k$. In~\cite{eatliv17} a partial generalization of~\cite{kk96} was given (generalized further in Proposition \ref{prop:grunit} below) which was sufficient to work with the Loewy length of blocks with arbitrary abelian defect groups. In this paper we combine this result with the existence of a certain perfect isometry from~\cite{wa05} to prove Donovan's conjecture for blocks (defined over $\cO$) with defect groups $C_{2^n} \times C_2 \times C_2$ for $n>1$ and further show that for each $n$ there are precisely three Morita equivalence classes of such blocks. This completes the classification of Morita equivalence classes of blocks with abelian defect groups of order dividing $16$ (see~\cite{li94},~\cite{ea16} and~\cite{ea17} for the elementary abelian $2$-groups and~\cite{ekks14} for $C_4 \times C_4$, noting that in all other cases $\Aut(D)$ is a $2$-group and so all blocks with that defect group are nilpotent). In~\cite{wzz17} it is shown that for $m,n \in \NN$ with $n \geq 2$, if a block has defect group $D \cong C_{2^n} \times C_{2^n} \times C_{2^m}$, then it is Mortia equivalent to it Brauer correspondent in $N_G(P)$ and so to one of $\cO D$, $\cO (D \rtimes C_3)$, $\cO (D \rtimes C_7)$ or $\cO (D \rtimes (C_7 \rtimes C_3))$. Hence the classification of Morita equivalence classes of $2$-blocks with abelian defect groups of rank at most $3$ is complete.

We refer the reader to~\cite{ea17} for a survey of progress on the problem of classifying Morita equivalence classes of blocks with a given defect group.

Recall the definition of the inertial quotient of a block $B$ of $\cO G$ with defect group $D$, where $G$ is a finite group. Let $b_D$ be a block of $C_G(D)$ with Brauer correspondent $(b_D)^G=B$. The stabilizer of $b_D$ in $N_G(D)$ under conjugation is written $N_G(D,b_D)$. Then $N_G(D,b_D)/DC_G(D)$ is a $p'$-group, and is called the inertial quotient of $B$ (unique up to isomorphism). $|E|$ is called the inertial index. $(D,b_D)$ is called a maximal $B$-subpair.

\begin{lem}
\label{inertial_quotient_lem}
Let $G$ be a finite group and $B$ be a block of $\cO G$ with defect group $D \cong C_{2^n}\times C_2\times C_2$ for $n>1$. There are two possible fusion systems for $B$, given by $D$ and $C_{2^n} \times A_4$. In particular the possible inertial quotients for $B$ are $1$ and $C_3$.
\end{lem}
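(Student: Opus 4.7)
Since $D$ is abelian, the inertial quotient $E = N_G(D, b_D)/DC_G(D)$ embeds canonically into $\Aut(D)$, and the fusion system $\mathcal{F}$ of $B$ on $D$ coincides with the fusion system of $D \rtimes E$ on $D$: this is a standard consequence of Alperin's fusion theorem, since $D$ itself is the only $\mathcal{F}$-essential subgroup when $D$ is abelian. Hence my plan is to classify $2'$-subgroups $E \leq \Aut(D)$ up to $\Aut(D)$-conjugacy and identify the corresponding semidirect products.

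Write $D = \langle x \rangle \times V$ with $|x| = 2^n$ and $V = \langle y, z \rangle \cong C_2 \times C_2$, and set $s = x^{2^{n-1}}$. Both $\Omega_1(D) = \langle s \rangle \times V \cong C_2^3$ and $D/\Omega_1(D) \cong C_{2^{n-1}}$ are characteristic in $D$, and since $n > 1$ the automorphism group of the latter is a $2$-group. Hence any $2'$-element $\phi \in \Aut(D)$ acts trivially on $D/\Omega_1(D)$, so $\phi(x) = xw$ for some $w \in \Omega_1(D)$. As $w^2 = 1$ and $2^{n-1}$ is even, $\phi(s) = \phi(x)^{2^{n-1}} = s$, so the restriction $\bar\phi$ of $\phi$ to $\Omega_1(D)$ lies in the stabiliser of $s$ in $GL_3(\mathbb{F}_2)$, which is isomorphic to $S_4$. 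The restriction map $\phi \mapsto \bar\phi$ is injective on $2'$-elements (if $\bar\phi = 1$ then $\phi^2 = 1$, forcing $\phi = 1$ in the odd-order case). Since the $2'$-part of $S_4$ has order $3$, I conclude $|E| \in \{1, 3\}$.

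If $E = \langle \phi \rangle \cong C_3$, the next step is to identify $D \rtimes E$. Let $\tau = \bar\phi|_V$, an element of order $3$ satisfying $\tau^2 + \tau + I = 0$. Writing $w = w' s^\epsilon$ with $w' \in V$ and $\epsilon \in \{0, 1\}$, a direct calculation using $(I + \tau + \tau^2) w' = 0$ on $V$ yields $\phi^3(x) = x \cdot s^\epsilon$; thus $\phi^3 = 1$ forces $\epsilon = 0$, and so $w \in V$. Since $\tau^2 + \tau + I = 0$ gives $I + \tau = \tau^{-1}$, the map $I + \tau$ is invertible on $V$; let $u \in V$ be the solution of $(I + \tau) u = w$ and set $x' = xu$. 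Then $\phi(x') = x'$ and $(x')^{2^{n-1}} = s \neq 1$, so $\langle x' \rangle$ is a $\phi$-invariant cyclic subgroup of order $2^n$ complementary to $V$ in $D$. Therefore $D \rtimes \langle \phi \rangle \cong \langle x' \rangle \times (V \rtimes \langle \phi \rangle) \cong C_{2^n} \times A_4$, giving the two stated fusion systems.

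I expect the main obstacle to be the last paragraph: proving $w \in V$ from the order condition $\phi^3 = 1$, and then producing the $\phi$-invariant cyclic complement via the invertibility of $I + \tau$ on $V$, is the substantive content. The first two paragraphs are standard structural reductions for fusion systems on abelian defect groups.
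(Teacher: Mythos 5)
Your argument is correct in substance and is considerably more careful than the paper's, which simply asserts ``$\Aut(D)\cong S_3$'' and reads off the odd-order subgroups. That assertion is in fact false as stated: for $D\cong C_{2^n}\times C_2\times C_2$ with $n>1$ one computes $|\Aut(D)|=3\cdot 2^{n+4}$, so the paper must be read as claiming only that the odd part of $\Aut(D)$ has order $3$ (equivalently, that $\Aut(D)$ has a quotient $\cong S_3$ through which all odd-order elements are detected). Your second paragraph proves exactly this: every $2'$-element acts trivially on $D/\Omega_1(D)$, fixes $s=x^{2^{n-1}}$, hence embeds into the stabiliser $\cong S_4$ of $s$ in $GL(\Omega_1(D))$, and the restriction is injective on $2'$-elements; therefore odd-order subgroups are $1$ or $C_3$, and (by Sylow inside the $2'$-part, or by your identification in the third paragraph) all $C_3$'s give the same fusion system. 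So you have supplied a correct proof of a statement the paper only gestures at, while also correcting an error in the published argument.

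One point to tidy in the third paragraph: you set $\tau=\bar\phi|_V$ before knowing that $\phi$ preserves $V$, and the computation $w'\,\phi(w')\,\phi^2(w')=1$ in $V$ uses this. Since $V$ was fixed at the outset, $\phi(V)=V$ need not hold (e.g.\ $\phi$ could send $y$ to $sy$). The fix is immediate: $\phi$ acts on $\Omega_1(D)$ with odd order and stabilises $\langle s\rangle$, so by Maschke's theorem $\langle s\rangle$ has a $\phi$-invariant complement $V'$ in $\Omega_1(D)$, and $D=\langle x\rangle\times V'$ still holds because $\langle x\rangle\cap\Omega_1(D)=\langle s\rangle$ meets $V'$ trivially. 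Replacing $V$ by $V'$ at the start of the $|E|=3$ case makes your computation literally valid, and the rest of the paragraph (invertibility of $I+\tau$ and the construction of the $\phi$-fixed generator $x'$ with $D\rtimes\langle\phi\rangle\cong\langle x'\rangle\times(V'\rtimes\langle\phi\rangle)\cong C_{2^n}\times A_4$) goes through unchanged.
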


\begin{proof}
We refer the reader to~\cite{ako} for background on fusion systems. Since $D$ is abelian, the fusion systems on $D$ are given by groups $D \rtimes E$, where $E$ is an odd-order subgroup of $\Aut(D)$. We have $\Aut(D) \cong S_3$, so the possibilities are $E=1$ or $C_3$.
\end{proof}

The main result is as follows (see Theorem \ref{thm:main}):

\begin{thm*}
Let $G$ be a finite group and $B$ a block of $\mathcal{O}G$ with defect group $D\cong C_{2^n}\times C_2\times C_2$ for $n>1$. Then $B$ is Morita equivalent to the principal block of $\mathcal{O}(C_{2^n}\times C_2\times C_2)$, $\mathcal{O}(C_{2^n}\times A_4)$ or $\mathcal{O}(C_{2^n}\times A_5)$.
\end{thm*}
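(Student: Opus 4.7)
The plan is to prove the theorem by induction on $|G|$, splitting according to the inertial quotient. By Lemma \ref{inertial_quotient_lem}, the inertial quotient $E$ of $B$ is either trivial or $C_3$. When $E=1$ the block $B$ is nilpotent and Puig's theorem (valid over $\mathcal{O}$) gives immediately that $B$ is Morita equivalent to $\mathcal{O}D$, accounting for the first case in the statement.

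The substance of the argument concerns the case $E=C_3$, where one must show that $B$ is Morita equivalent to $\mathcal{O}(C_{2^n}\times A_4)$ or $\mathcal{O}(C_{2^n}\times A_5)$. First I would apply the standard Clifford-theoretic reductions (Fong--Reynolds to make $B$ quasi-primitive and passage to a suitable central quotient to arrange $O_{2'}(G)\le Z(G)$) and then stratify according to the generalised Fitting subgroup $F^*(G)$. When $G$ has a non-trivial component, CFSG together with the known block theory of quasisimple groups should restrict the possibilities severely: the fusion system of $C_{2^n}\times A_4$ and the rank-$3$ constraint on $D$ essentially force a component of $SL_2$-type, and the resulting block is Morita equivalent to $\mathcal{O}(C_{2^n}\times A_5)$, while the purely $2$-local configurations $F^*(G)=O_2(G)$ with a $C_3$ acting produce the $\mathcal{O}(C_{2^n}\times A_4)$ class.

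The step I expect to be the main obstacle is the inductive reduction through a normal subgroup $N$ of index $2$, the novelty flagged in the abstract. Since $D$ contains $C_{2^{n-1}}\times C_2\times C_2$ as a subgroup of index $2$, which is in the same family, whenever $B$ covers a block $b$ of $\mathcal{O}N$ with defect group of this type the induction identifies $b$ up to Morita equivalence, and the difficulty is to transfer this equivalence from $b$ to $B$. The classical Kessar--Külshammer result~\cite{kk96} does not suffice here because it applies only to split extensions by a subgroup of the defect group and only over $k$. Instead I would invoke Proposition \ref{prop:grunit}, the $\mathcal{O}$-version generalising~\cite{kk96}, feeding in as compatibility data the perfect isometry from~\cite{wa05}; this combination is what I anticipate as the technical heart of the paper.

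Finally, the three candidate algebras $\mathcal{O}D$, $\mathcal{O}(C_{2^n}\times A_4)$ and $\mathcal{O}(C_{2^n}\times A_5)$ are pairwise non-Morita equivalent, being distinguishable by their numbers of simple modules and their Cartan invariants, so the induction produces exactly the three Morita equivalence classes claimed.
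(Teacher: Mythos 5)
Your outline correctly identifies the broad architecture of the proof: split on the inertial quotient (trivial gives the nilpotent case via Puig; $C_3$ is the substance), reduce via Fong--Reynolds and K\"ulshammer--Puig to a quasiprimitive block, pass through the generalized Fitting subgroup to a quasisimple group, and treat the index-$2$ step as the genuinely new obstacle. This is the shape of the argument in the paper (though the paper inducts on $[G:O_{2'}(Z(G))]$ rather than $|G|$, because the K\"ulshammer--Puig extension step can enlarge the group while preserving this index).

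However, there is a real gap in the step you yourself flag as the heart of the matter. Saying you would ``invoke Proposition \ref{prop:grunit}\ldots feeding in as compatibility data the perfect isometry from~\cite{wa05}'' does not explain \emph{how} a perfect isometry together with a $G/N$-graded central unit produces a Morita equivalence of $B$ with $\cO(C_{2^n}\times A_4)$. A perfect isometry is far weaker than a Morita equivalence, and it is not immediate that the Watanabe isometry is compatible with the gradings coming from the index-$2$ normal subgroups $N\lhd G$ and $C_{2^{n-1}}\times A_4\lhd C_{2^n}\times A_4$. The missing mechanism is the content of Theorem \ref{index2theorem}: one must first compute the full group of perfect self-isometries of $\cO(C_{2^n}\times A_4)$ (Propositions \ref{prop:self_A4}, \ref{prop:self_cyclic} and Theorem \ref{thm:C2nA4}), use this to show that the conjugate $I_{\operatorname{PI}}(J)$ of the sign-twist isometry $J$ is forced to be the expected grading automorphism, deduce that the given isometry restricts to a perfect isometry $b\leftrightarrow b'$ of the index-$2$ blocks (Proposition \ref{prop:index_p}(iii)), adjust so that this restricted isometry coincides with the known Morita equivalence of $b$ with $b'$, and only then apply Marcus's descent theorem~\cite[Theorem 3.4]{mar96} to lift the Morita equivalence from $N$ to $G$ using the graded units $a$ and $a'=\phi_I(a)$. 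Without this chain the claimed transfer from $b$ to $B$ does not go through. Similarly, the assertion that CFSG ``forces a component of $SL_2$-type'' at the quasisimple stage is too coarse: the actual input, \cite[6.1]{ekks14}, yields either a tensor decomposition with a Klein four block factor (resolved by Linckelmann) or a nilpotent-covered situation resolved by Puig's nilpotent extension theorem~\cite{pu11} combined with the normal defect case — neither of which is an $SL_2$ statement per se.
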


Combining with the results of~\cite{ea16},~\cite{ekks14} and~\cite{wzz17} we conclude:

\begin{cor*}
Let $G$ be a finite group and $B$ a block of $\mathcal{O}G$ with defect group $D$ of $2$-rank at most $3$. Then $B$ is Morita equivalent to the principal block of one of:

(i) $\cO D$;

(ii) $\cO (D \rtimes C_3)$;

(iii) $\cO (C_{2^n}\times A_5)$ for $n \geq 0$;

(iv) $\cO (D \rtimes C_7)$;

(v) $\cO SL_2(8)$;

(vi) $\cO (D \rtimes (C_7 \rtimes C_3))$;

(vii) $\cO J_1$;

(viii) $\cO \Aut(SL_2(8))$.

\end{cor*}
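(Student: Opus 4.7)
The plan is a case analysis on the isomorphism type of the abelian defect group $D$ of $2$-rank at most $3$, combining Theorem~\ref{thm:main} with the cited prior classifications and verifying that every Morita class produced appears in the list (i)--(viii).

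First I would dispose of those $D$ for which $\Aut(D)$ is a $2$-group. Since $\Aut(C_{2^n})$ is itself a $2$-group, among abelian $2$-groups of rank $\leq 3$ only those with at least two cyclic factors of the same order can carry a non-trivial inertial quotient. For every other $D$ the fusion system on $D$ is nilpotent, so by Puig's theorem on nilpotent blocks $B$ is Morita equivalent to $\cO D$, giving case~(i). This handles all cyclic $D$ and all rank $2$ or $3$ groups whose cyclic factor exponents are pairwise distinct.

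For the remaining types, those with at least one repeated factor, I would invoke the appropriate classification: $C_2^2$ and $C_2^3$ via \cite{ea16} (the eight Morita classes of $C_2^3$-blocks realise (i)--(viii) with $n=0$ in (iii), while $C_2^2$ realises (i), (ii), and (iii) at $n=0$); $C_4\times C_4$ via \cite{ekks14}; the rank-$3$ family $C_{2^n}\times C_{2^n}\times C_{2^m}$ with $n\geq 2$ via \cite{wzz17} (producing (i), (ii), (iv) and (vi)); and $C_{2^n}\times C_2\times C_2$ with $n>1$ via Theorem~\ref{thm:main} of this paper, which yields (i), (ii) (after the identification $\cO(C_{2^n}\times A_4)\cong \cO(D\rtimes C_3)$ with $C_3$ acting trivially on the $C_{2^n}$ factor), and (iii).

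The key observation that closes the case analysis is that every abelian $2$-group of rank $\leq 3$ with a repeated cyclic factor is either elementary abelian, of the form $C_{2^n}\times C_{2^n}\times C_{2^m}$ with $n\geq 2$, or $C_{2^n}\times C_2\times C_2$ with $n\geq 2$, and so one of the four references above applies. The ``obstacle'' is really only bookkeeping: the substantive content sits in Theorem~\ref{thm:main} and in the cited works, and what remains is to verify that no isomorphism type of $D$ is overlooked and that each realised Morita class is correctly matched against (i)--(viii); both verifications are immediate from the elementary divisor decomposition of $D$.
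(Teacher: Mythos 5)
Your proposal matches the paper's own argument, which is exactly this case analysis on the isomorphism type of (an abelian) $D$, dispatching the nilpotent types via Puig's theorem and combining Theorem~\ref{thm:main} with the classifications in~\cite{li94},~\cite{ea16},~\cite{ekks14} and~\cite{wzz17}. Two small bookkeeping slips: the Klein four case $C_2\times C_2$ is treated by Linckelmann~\cite{li94} (and by~\cite[1.1]{ekks14}), not by~\cite{ea16}, which concerns defect groups of order~$8$; and for $D\cong C_2^3$ the class in item~(iii) occurs at $n=1$, namely $\cO(C_2\times A_5)$, not at $n=0$. You should also note explicitly that the rank-two family $C_{2^n}\times C_{2^n}$ with $n\geq 2$ is covered by~\cite[1.1]{ekks14} rather than subsumed into the~\cite{wzz17} rank-three family.
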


Observe that this means that every block with this defect group is Morita equivalent to a principal block, and so in particular the Morita Frobenius number as defined in~\cite{ke05} is one. Note that if $B$ above is not nilpotent, then the number $l(B)$ of irreducible Brauer characters of $B$ is $3$ and the number of irreducible characters $k(B)=2^{n+2}$ (by Proposition \ref{per_isom}, which does not use the classification of finite simple groups).

Another consequence of Theorem \ref{thm:main} is the following (see Corollary \ref{cor:derived}):

\begin{cor*}
Let $G$ be a finite group and $B$ a block of $\mathcal{O}G$ with defect group $D\cong C_{2^n}\times C_2\times C_2$ for $n>1$. Let $b$ be the unique block of $N_G(D)$ with $b^G=B$. Then $B$ and $b$ are derived equivalent.
\end{cor*}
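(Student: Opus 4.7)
The plan is to combine Theorem \ref{thm:main} with the standard classification of the Brauer correspondent in $N_G(D)$, and then reduce the one nontrivial case to Rickard's derived equivalence for blocks with Klein four defect group. By Lemma \ref{inertial_quotient_lem}, the inertial quotient $E$ of $B$ is either $1$ or $C_3$, and in both cases $H^2(E,\cO^\times) = 0$.

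First I would identify $b$ up to Morita equivalence. Because $D$ is abelian, $D \leq Z(C_G(D))$, so the block $b_D$ of $C_G(D)$ with $(b_D)^{N_G(D)} = b$ has central defect, is nilpotent, and hence is Morita equivalent to $\cO D$. By K\"ulshammer's theorem for blocks with normal abelian defect group, together with the vanishing of $H^2(E,\cO^\times)$, it follows that $b$ is Morita equivalent to $\cO(D \rtimes E)$: this is $\cO D$ when $E = 1$ and $\cO(C_{2^n}\times A_4)$ when $E = C_3$.

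Next I would treat the three cases furnished by Theorem \ref{thm:main}. If $B$ is Morita equivalent to $\cO(C_{2^n} \times C_2 \times C_2)$ then $l(B)=1$, so $E = 1$ and $b$ is also Morita equivalent to $\cO D$. If $B$ is Morita equivalent to $\cO(C_{2^n} \times A_4)$ then $E = C_3$ and $b$ is again Morita equivalent to $\cO(C_{2^n} \times A_4)$. In both of these cases $B$ and $b$ are already Morita, and so derived, equivalent.

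The remaining case is $B$ Morita equivalent to $\cO(C_{2^n}\times A_5)$, for which $b$ is Morita equivalent to $\cO(C_{2^n}\times A_4)$. Here I would invoke Rickard's proof of Brou\'e's conjecture for blocks with Klein four defect group, which supplies a two-sided tilting complex $T$ realising a derived equivalence between the principal blocks of $\cO A_5$ and $\cO A_4 = \cO N_{A_5}(V_4)$. Tensoring $T$ over $\cO$ with $\cO C_{2^n}$ then yields a two-sided tilting complex between $\cO(C_{2^n}\times A_4)$ and $\cO(C_{2^n}\times A_5)$; composing with the two Morita equivalences gives the required derived equivalence between $B$ and $b$. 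The main point requiring care will be the compatibility of the Morita equivalence from Theorem \ref{thm:main} with Brauer correspondence; since that equivalence is produced through source algebras, as is standard in classifications of this type, this compatibility follows automatically.
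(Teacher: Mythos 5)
Your argument is essentially the same as the paper's, which invokes Rickard's derived equivalence between the principal blocks of $\cO A_4$ and $\cO A_5$, promotes it to one between $\cO(C_{2^n}\times A_4)$ and $\cO(C_{2^n}\times A_5)$, and concludes from Theorem~\ref{thm:main} that there are only two derived equivalence classes of blocks with defect group $C_{2^n}\times C_2\times C_2$; you simply make the case analysis explicit. One minor comment: the closing sentence raises a spurious worry. No ``compatibility with Brauer correspondence'' is required beyond the elementary observation that $B$ and its Brauer correspondent $b$ share the same inertial quotient (since $N_{N_G(D)}(D,b_D)=N_G(D,b_D)$), which is exactly what you use implicitly when you read off the Morita class of $b$ from $E$; composing the three derived equivalences then works with no further compatibility. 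Moreover the appeal to source algebras is inaccurate: the Morita equivalences furnished by Theorem~\ref{thm:main} are not all source-algebra equivalences --- those arising from Theorem~\ref{index2theorem} are built from perfect isometries together with Marcus's descent theorem, not from source algebras --- so that sentence should be dropped.
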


Gathering together previous results, this completes the proof of Brou\'e's conjecture for $2$-blocks of defect at most $4$ and also for those of rank at most $3$ (see Corollaries \ref{cor:derived16} and \ref{cor:derivedrank3}).

The paper is structured as follows. In Section 2 we address the problem of lifting a Morita equivalence from a normal subgroup of index $2$. We obtain a perfect isometry from~\cite{wa05} and show that this may be modified by a perfect self-isometry to produce a Morita equivalence using the central unit described in~\cite{eatliv17}. In Section 3 we apply the classification in~\cite{ekks14} to prove the main theorem, using the results of Section 2 to help reduce to quasisimple groups.


\section{Normal subgroups of index 2}

We first introduce some notation.

Let $G$ be a finite group and $N \lhd G$. For a block $B$ of $\cO G$ we write $\Irr(B)$ for the set of irreducible characters of in $B$ (with respect to $K$) and $\Irr(B,\psi)$ for the set of irreducible characters in $B$ covering $\psi$ (that is, whose restriction has $\psi$ as a summand). Write $\IBr(B)$ for the set of irreducible Brauer characters. Write $\prj (B)$ for the set characters of projective indecomposable $B$-modules.

Suppose $B$ has defect group $D$. Let $(D,b_D)$ be a maximal $B$-subpair (see~\cite{alp86} for background on subpairs). Note that all maximal $B$-subpairs are $G$-conjugate. If $u \in D$ and $b_u$ is a block of $\cO C_G(u)$ with $(b_D)^{C_G(u)}=b_u$, then we call $(u,b_u)$ a subsection in $(D,b_D)$, and write $(u,b_u) \in (D,b_D)$.

\begin{defi}[\cite{br90}]
Let $G$ be a finite group and $B$ a block of $\mathcal{O}G$. We denote by $\CF(G,B,K)$ the $K$-subspace of class functions on $G$ spanned by $\Irr(B)$, by $\CF(G,B,\mathcal{O})$ the $\mathcal{O}$-submodule
\begin{align*}
\{\phi\in \CF(G,B,K):\phi(g)\in\mathcal{O}\text{ for all }g\in G\}
\end{align*}
of $\CF(G,B,K)$ and by $\CF_{p'}(G,B,\mathcal{O})$ the $\mathcal{O}$-submodule
\begin{align*}
\{\phi\in \CF(G,B,\mathcal{O}):\phi(g)=0\text{ for all }g\in G\backslash G_{p'}\}
\end{align*}
of $\CF(G,B,\mathcal{O})$.
\newline
\newline
Now in addition let $H$ be a finite group and $C$ a block of $\mathcal{O}H$. A \textbf{perfect isometry} between $B$ and $C$ is an isometry
\begin{align*}
I:\mathbb{Z}\Irr(B)\to\mathbb{Z}\Irr(C),
\end{align*}
such that
\begin{align*}
I_K:=K\otimes_{\mathbb{Z}}I:K\Irr(B)\to K\Irr(C),
\end{align*}
induces an $\cO$-module isomorphism between $\CF(G,B,\mathcal{O})$ and $\CF(H,C,\mathcal{O})$ and also between $\CF_{p'}(G,B,\mathcal{O})$ and $\CF_{p'}(H,C,\mathcal{O})$. (Note that by an isometry we mean an isometry with respect to the usual inner products on $\mathbb{Z}\Irr(B)$ and $\mathbb{Z}\Irr(C)$, so for all $\chi\in\Irr(B)$, $I(\chi)=\pm\psi$ for some $\psi\in\Irr(C)$).
\end{defi}

\begin{rem}
An alternative way of phrasing the condition that $I_K$ induces an isomorphism between $\CF_{p'}(G,B,\mathcal{O})$ and $\CF_{p'}(H,C,\mathcal{O})$ is that $I$ induces an isomorphism $\mathbb{Z}\prj (B) \cong \mathbb{Z}\prj (C)$.
\end{rem}

We will be using the following well-known result frequently, so we include a proof.

\begin{lem}
\label{index_p_background_lem}
Let $G$ be a finite group and $N$ a normal subgroup of $G$ of index a power of $p$. Let $B$ be a block of $\mathcal{O}G$ with defect group $D$ covering a block $b$ of $\cO N$. Then $B$ is the unique block of $\cO G$ covering $b$, $D \cap N$ is a defect group for $b$ and the stabilizer of $b$ in $G$ is $DN$.
\end{lem}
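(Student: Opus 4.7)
Plan. The idea is to combine the Fong--Reynolds correspondence with the following standard fact for $p$-power extensions: if $H/M$ is a $p$-group and $b$ is an $H$-stable block of $M$, then there is a unique block $B_H$ of $\cO H$ covering $b$, and any defect group $D'$ of $B_H$ satisfies $D'\cap M$ is a defect group of $b$ and $D'M=H$.

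Write $T=T_G(b)$ for the stabilizer. Since $N\leq T\leq G$ and $[G:N]$ is a $p$-power, $[T:N]$ is also a $p$-power and $b$ is $T$-stable by construction. First I would apply the Fong--Reynolds correspondence to pass from $G$ to $T$: this gives a bijection between blocks of $\cO G$ covering $b$ and blocks of $\cO T$ covering $b$ via $B'\mapsto (B')^G$, with corresponding blocks sharing defect groups up to $G$-conjugacy. Next, applying the standard $p$-power extension fact to $N\lhd T$, there is a unique block $B_T$ of $\cO T$ over $b$ with a defect group $D'$ such that $D'\cap N$ is a defect group of $b$ and $D'N=T$. Transferring back through Fong--Reynolds, $B=(B_T)^G$ is the unique block of $\cO G$ over $b$, and after replacing $D$ by an appropriate $G$-conjugate we may assume $D=D'$, which yields that $D\cap N$ is a defect group of $b$ and $DN=T$.

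The main obstacle is the $p$-power extension fact itself. Uniqueness of $B_H$ ultimately rests on the vanishing of the obstruction in $H^2(H/M,k^\times)$ to extending the block idempotent of $b$ to a central idempotent of $\cO H$, which holds because $H/M$ is a $p$-group while $k^\times$ has no $p$-torsion. For the defect-group assertion one combines Kn\"orr's theorem (that $D'\cap M$ is a defect group of $b$) with a defect count: the defect of $B_H$ exceeds that of $b$ by exactly $\nu_p([H:M])$, which forces $[D':D'\cap M]=[H:M]$ and so $D'M=H$. Modulo these invocations the argument is almost automatic from Fong--Reynolds, and it is only the compatibility of defect groups across the three layers $N\leq T\leq G$ that one needs to track carefully.
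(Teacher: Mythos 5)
Your argument is correct and is, in effect, a reconstruction of the two results the paper merely cites: its own proof is a reference to \cite[V.3.5]{feit} for uniqueness and to \cite[15.1]{alp86} for the defect-group and stabiliser statements, together with the remark that blocks over $k$ and over $\mathcal{O}$ correspond bijectively. Your Fong--Reynolds reduction to the $T$-stable case, the $H^2(T/N,k^\times)=1$ argument for uniqueness (combined with $T/N$ being a $p$-group, so that the relevant untwisted group algebra of $T/N$ is local), and the Kn\"orr-plus-defect-count argument for $D'\cap N$ and $D'N=T$ are precisely what underlie those citations, so the approach is the same in substance, just unpacked rather than referenced.

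The one step that deserves a word of care is ``after replacing $D$ by an appropriate $G$-conjugate we may assume $D=D'$.'' To keep the conclusions attached to the \emph{given} $b$, the conjugating element should be taken in $T=T_G(b)$ rather than arbitrary in $G$: conjugating by $g\notin T$ replaces $b$ by $b^g$ and $T$ by $T^g$, so one would obtain $D\cap N$ a defect group of $b^g$ and $DN=T^g$ instead. This is harmless here, since a defect group of $B$ contained in $T$ is automatically a defect group of the Fong--Reynolds correspondent $B_T$, hence $T$-conjugate to $D'$, and in every application of the lemma in this paper $b$ is in fact $G$-stable so $T=G$ and the distinction disappears; but it is worth stating explicitly.
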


\begin{proof}
That $B$ is the unique block of $\cO G$ covering $b$ is~\cite[V.3.5]{feit}. The rest follows from~\cite[15.1]{alp86}, noting that there is a one to one correspondence between blocks defined over $k$ and blocks defined over $\cO$.
\end{proof}

\begin{prop}\label{prop:grunit}
Let $G$ be a finite group and $N$ a normal subgroup of $G$ of index $p$. Now let $B$ be a block of $\mathcal{O}G$ with abelian defect group $D$ such that $G=ND$. Then there exists a block $b$ of $\mathcal{O}N$ with the same block idempotent as $B$ and defect group $D\cap N$. Moreover there exists a $G/N$-graded unit $a\in Z(B)$, in particular $B=\bigoplus_{j=0}^{p-1}a^jb$.
\end{prop}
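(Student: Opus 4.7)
The plan is to separate the statement into three parts: existence of a block $b$ of $\cO N$ with $e_b=e_B$ and defect group $D\cap N$; the $G/N$-grading $B=\bigoplus_{j=0}^{p-1}x^j b$; and construction of a homogeneous central unit $a\in Z(B)$ generating the grading. The first two steps follow directly from Lemma \ref{index_p_background_lem}; the central unit is the main content.

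For the first two parts, pick any block $b$ of $\cO N$ covered by $B$. Lemma \ref{index_p_background_lem} gives that $b$ has defect group $D\cap N$, that the stabilizer of $b$ in $G$ equals $DN=G$, and that $B$ is the unique block of $\cO G$ covering $b$. Hence the block idempotent $e_b$ is $G$-invariant, central in $\cO G$, and thus equal to $e_B$. Selecting $x\in D$ so that $xN$ generates $G/N$ (possible as $G=ND$ and $[G:N]=p$), the coset decomposition $\cO G=\bigoplus_{j=0}^{p-1}\cO N\cdot x^j$, multiplied through by $e_B=e_b$, yields $B=\bigoplus_{j=0}^{p-1}b\cdot x^j$.

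For the central unit, note that $B$ is generated as an $\cO$-algebra by $b$ together with $xe_B$, so a homogeneous element $a=\alpha x$ (with $\alpha\in b^\times$) lies in $Z(B)$ iff $a$ commutes with all of $b$ and with $xe_B$. A direct computation shows that the first condition is equivalent to $c_x|_b=c_{\alpha^{-1}}|_b$ in $\Aut(b)$, where $c_g$ denotes conjugation by $g$; granting this, the second condition $c_x(\alpha)=\alpha$ follows automatically. Thus the existence of the central graded unit is equivalent to the statement that $c_x|_b$ is inner in $b$, i.e., trivial in $\Out(b)$.

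The main obstacle is producing the unit $\alpha\in b^\times$ that realises $c_x|_b$ as inner. The crucial input is that $D$ is abelian, so $x\in Z(C_G(D))$, whence $x$ already lies in the centre of the Brauer-corresponding block algebra $\cO C_G(D)b_D$ for a maximal $B$-subpair $(D,b_D)$. I would transport this centrality back to $B$ via the source algebra of $B$, which is an interior $D$-algebra in which the conjugation action of $x$ is given by an explicit unit; this is broadly the central-unit construction of \cite{eatliv17}. The delicate step is producing from this a unit in $b^\times$ (rather than only in a source algebra) making $a$ central in the whole of $B$; cyclicity of $G/N$ is used to ensure that only a single cohomological obstruction class needs to be killed.
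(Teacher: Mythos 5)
Your treatment of the first two parts is fine: Lemma~\ref{index_p_background_lem} gives that $b$ has defect group $D\cap N$ and stabiliser $DN=G$, $B$ is the unique block covering $b$, and $G$-invariance of $e_b$ together with uniqueness of $B$ forces $e_b=e_B$; the coset decomposition $B=\bigoplus_j b\cdot x^j$ then follows.

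The gap is in the construction of the graded central unit, and it is a real one. You set up the problem correctly --- the existence of a homogeneous central unit $a=\alpha x$ amounts to showing $c_x|_b$ is an inner automorphism of $b$ realised by some $\alpha\in b^\times$, and you correctly identify that abelianness of $D$ puts $x$ in the centre of $\cO C_G(D)b_D$ so that some transport via the source algebra is needed. But you then say you ``would transport this centrality back to $B$'' and acknowledge the delicate step of getting a unit in $b^\times$ (not merely in a source algebra) and killing the cohomological obstruction --- and you stop there. That is precisely the hard content, and it is not filled in. Moreover, you are attempting to run the construction directly over $\cO$. The paper's proof avoids this entirely: it quotes \cite[Theorem~2.1]{eatliv17} as a black box, which produces a $G/N$-graded unit $\overline{a}\in Z(kB)$ \emph{over the residue field $k$}, and the genuinely new content of Proposition~\ref{prop:grunit} is the short lifting argument --- every element of $Z(kG)$ lifts to $Z(\cO G)$, the lift can be taken $G/N$-homogeneous because $Z(\cO G)$ is $G/N$-graded and $e_B\in\cO N$, and the lift is a unit because $Z(B)$ is local and $\overline{a}$ is a unit. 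Your proposal neither invokes the $k$-version as given nor carries out an $\cO$-version from scratch, so it does not close. If you want to salvage your route, the cleanest fix is to do exactly what the paper does: take the existence of $\overline{a}\in Z(kB)$ for granted from \cite{eatliv17}, and then add the three-line lifting argument (surjectivity of $Z(\cO G)\to Z(kG)$, compatibility with the $G/N$-grading, and locality of $Z(B)$) rather than attempting a direct $\cO$-level construction of $\alpha$.
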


\begin{proof}
By~\cite[Theorem 2.1]{eatliv17} there exists a $G/N$-graded unit $\overline{a}\in Z(kB)$. Now as every element of $Z(kG)$ can be lifted to an element of $Z(\mathcal{O}G)$ we can lift $\overline{a}$ to $a\in Z(B)$. As the block idempotent of $B$ lies in $\mathcal{O}N$ and $Z(\mathcal{O}G)$ is $G/N$-graded, then we can assume $a$ is $G/N$-graded. Finally, as $Z(B)$ is a local ring and $a$ certainly does not lie in its maximal ideal ($\overline{a}$ is a unit), we have that $a$ is a unit.
\end{proof}

\begin{rem}\label{rem:inner}
In the setting of the previous Proposition \ref{prop:grunit} we note that conjugation by $G$ induces only inner automorphisms of $b$, and so in particular $Z(b) \subseteq Z(B)$.
\end{rem}

\begin{prop}\label{prop:index_p}
Let $G$, $N$, $B$ and $b$ be as in the Proposition \ref{prop:grunit}. Then

(i) Every irreducible character of $b$ is $G$-stable and extends to $p$ distinct irreducible characters of $B$.

(ii) Induction $\Ind_N^G$ gives a bijection between the projective indecomposable $b$-modules and the projective indecomposable $B$-modules.

(iii) Now let $G'$, $N'$, $B'$ and $b'$ be another quadruple satisfying Proposition \ref{prop:grunit}. For each $\chi \in \Irr(b)$ write $\Irr(B,\chi)= \{\chi_1,\ldots, \chi_p \}$.

Suppose $I:\mathbb{Z}\Irr(B)\to\mathbb{Z}\Irr(B')$ is a perfect isometry such that for each $\chi \in \Irr(b)$ there is $\psi \in \Irr(b')$ and $\epsilon_\chi \in\{\pm1\}$ such that $I(\chi_i)=\epsilon_\chi \psi_i$ for $i=1, \ldots, p$ where $\Irr(B',\psi)= \{\psi_1, \ldots , \psi_p \}$. Then the isometry $I_{N,N'}:\mathbb{Z}\Irr(b)\to\mathbb{Z}\Irr(b')$ defined by $I_{N,N'}(\chi):=\epsilon_\chi \psi$ is perfect.

\end{prop}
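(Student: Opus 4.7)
The plan is to exploit throughout the central $G/N$-graded unit $a\in Z(B)$ supplied by Proposition \ref{prop:grunit}, which pins down how simple $B$-modules restrict to $b$, and then to reduce (iii) to the perfectness of $I$ by an extension-by-zero trick applied to the intertwining $I\circ\Ind_N^G=\Ind_{N'}^{G'}\circ I_{N,N'}$.

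For (i), given $V\in\Irr(B)$ over $K$, the element $a$ acts on $V$ as a scalar, so every $b$-submodule of $V|_N$ is automatically $a$-stable and hence $B$-stable since $B=\bigoplus_{j=0}^{p-1}a^jb$; thus $V|_N$ is irreducible. Since $B$ covers $b$, every $\chi\in\Irr(b)$ lies under some character in $\Irr(B)$, and the above rules out the induced alternative of Clifford theory, so $\chi$ is $G$-stable and extends, with the $p$ extensions obtained by tensoring one of them with the $p$ linear characters of $G/N\cong C_p$. For (ii), the identical scalar argument works over $k$, with the extra observation that every element of $k$ has a unique $p$-th root by Frobenius, so each $\phi\in\IBr(b)$ has a unique extension $\tilde\phi\in\IBr(B)$. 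If $P_\phi$ denotes the PIM of $\phi$, then $\Ind_N^G P_\phi=B\otimes_b P_\phi$ is projective as a $B$-module, and Frobenius reciprocity gives $\mathrm{Hom}_B(\Ind_N^G P_\phi, V)\cong\mathrm{Hom}_b(P_\phi, V|_N)$, which is one-dimensional for $V=\tilde\phi$ and zero otherwise; hence $\Ind_N^G P_\phi$ has simple head $\tilde\phi$ and is the PIM of $\tilde\phi$ in $B$, giving the required bijection.

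For (iii) the key identity is $\Ind_N^G\chi=\sum_{i=1}^p\chi_i$ for $\chi\in\Irr(b)$: the right-hand side vanishes off $N$ (as the sum of the $p$ linear characters of $G/N$ does) and restricts to $p\chi$ on $N$. The hypothesis on $I$ then yields $I_K\circ\Ind_N^G=\Ind_{N'}^{G'}\circ(I_{N,N'})_K$ on $\Irr(b)$, extended by $K$-linearity. For $\phi\in\CF(N,b,K)$, writing $\hat\phi$ for its extension by zero to $G$, the $G$-stability from (i) makes $\hat\phi$ a class function on $G$ with $\hat\phi=\tfrac{1}{p}\Ind_N^G\phi\in\CF(G,B,K)$, and the intertwining becomes $I_K(\hat\phi)=\widehat{(I_{N,N'})_K(\phi)}$. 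Now if $\phi\in\CF(N,b,\cO)$ then $\hat\phi\in\CF(G,B,\cO)$ trivially (zero off $N$), so perfectness of $I$ forces $I_K(\hat\phi)\in\CF(G',B',\cO)$, and restriction to $N'$ yields $(I_{N,N'})_K(\phi)\in\CF(N',b',\cO)$. The same argument with $\CF_{p'}$ in place of $\CF$ works identically, because $\hat\phi$ vanishes off $N$ and so $\hat\phi\in\CF_{p'}(G,B,\cO)$ iff $\phi\in\CF_{p'}(N,b,\cO)$. The inverse of $I_{N,N'}$ is handled symmetrically, yielding the required $\cO$-module isomorphisms, and $I_{N,N'}$ is an isometry by construction.

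The main obstacle is really in (i): standard Clifford theory alone yields only $G$-stability of $\chi$, not an actual extension to $B$, and it is the centrality and unit property of the graded $a$ that force $V|_N$ to be simple for every $V\in\Irr(B)$, collapsing the induced-character alternative. Once that is in hand, (ii) is immediate and (iii) is a formal chase along the extension-by-zero dictionary.
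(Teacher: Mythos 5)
Your proof is correct, and the central tool is the same as the paper's: the graded central unit $a$ from Proposition~\ref{prop:grunit} and the observation that the inflation $\tilde\phi$ (your $p\hat\phi$) lands in $p\CF(G,B,\mathcal{O})$. There are, however, some genuine variations in how the three parts are derived. For (i), you argue directly that $a$ acts as a scalar on any simple $KB$-module $V$, so that $B=\bigoplus_j a^j b$ forces $V|_N$ to be simple; this gives $G$-stability and the existence of the extension in one step. The paper instead quotes Remark~\ref{rem:inner} (conjugation by $G$ is inner on $b$) to get $G$-stability and then invokes Clifford theory for cyclic quotients to extend. These rest on the same fact (existence of $a$), and your version has the mild advantage of not needing to cite the cyclic-quotient extension criterion. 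For (ii), you proceed via Frobenius reciprocity $\mathrm{Hom}_B(\Ind_N^G P_\phi,V)\cong\mathrm{Hom}_b(P_\phi,V|_N)$, using the uniqueness of extensions of simple $kb$-modules to $kB$ (which holds because $k^\times$ has no $p$-torsion in characteristic $p$), to conclude $\Ind_N^G P_\phi$ is the PIM of $\tilde\phi$; the paper instead applies Green's indecomposability theorem and then distinguishes the induced PIMs by observing $\Res_N^G\Ind_N^G P\cong P^{\oplus p}$. Both are short and correct. For (iii), working with the extension-by-zero $\hat\phi=\tfrac{1}{p}\tilde\phi$ rather than with $\tilde\phi$ directly is purely cosmetic: the paper shows $\tilde\phi\in p\CF(G,B,\mathcal{O})$, you show $\hat\phi\in\CF(G,B,\mathcal{O})$, and the rest of the bookkeeping, including the $\CF_{p'}$ case, matches.
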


\begin{proof}$ $
(i) By Remark~\ref{rem:inner} every character $\chi$ of $b$ is $G$-stable. Therefore, as $G/N$ is cyclic, $\chi$ extends to $G$. Taking the product with the $p$ distinct linear characters of $G/N$ inflated to $G$ gives the $p$ extensions of $\chi$ to $G$. Now every constituent of an irreducible character of $B$ restricted to $N$ must lie in $b$ and so every irreducible character of $B$ is the extension of some irreducible character of $b$.

(ii) Certainly every projective indecomposable $B$-module is a summand of some projective indecomposable $b$-module induced to $G$ and Green's indecomposability ensures $\Ind_N^G$. To prove we have a bijection we need $\Ind^G_N(P)$ and $\Ind^G_N(Q)$ to be non-isomorphic for non-isomorphic projective indecomposable $b$-modules $P$ and $Q$. However, this is true since by~\ref{prop:grunit}, $\Res^G_N\Ind^G_N(P)\cong P^{\oplus p}$.

(iii) For $\chi \in \Irr(b)$ and $\psi \in \Irr(b')$, write $\tilde{\chi}=\sum_{i=1}^p \chi_i$ and $\tilde{\psi}=\sum_{i=1}^p \psi_i$.

Let
\begin{align*}
\phi=\sum_{\chi\in\Irr(b)}\alpha_\chi\chi\in\CF(N,b,\mathcal{O}),
\end{align*}
for some $\alpha_\chi\in K$ and consider
\begin{align*}
\tilde{\phi}:=\sum_{\chi\in \Irr(b)} \alpha_\chi\tilde{\chi}.
\end{align*}
Note that $\tilde{\phi}(g)=p\phi(g)$ for all $g\in N$ and $\tilde{\phi}(g)=0$ for all $g\in G\backslash N$, so $\tilde{\phi}\in p\CF(G,B,\mathcal{O})$. Now note that
\begin{align*}
(I_{N,N'})_K(\phi)=\sum_{\psi\in\Irr(b')}\beta_\psi\psi,
\end{align*}
where
\begin{align*}
I_K(\tilde{\phi})=\sum_{\psi\in\Irr(b')} \beta_\psi\tilde{\psi}.
\end{align*}
As $\tilde{\phi}\in p\CF(G,B,\mathcal{O})$, we have $I_K(\tilde{\phi})\in p\CF(G',B',\mathcal{O})$. Again $I_K(\tilde{\phi})(g)=p(I_{N,N'})_K(\phi)(g)$ for all $g\in N'$ and so $(I_{N,N'})_K(\phi)\in\CF(N',b',\mathcal{O})$. So $(I_{N,N'})_K(\CF(N,b,\mathcal{O}))\subseteq\CF(N',b',\mathcal{O})$ and by an identical argument $(I_{N,N'})_K^{-1}(\CF(N',b',\mathcal{O}))\subseteq\CF(N,b,\mathcal{O})$.
\newline
\newline
Now suppose in addition that $\phi\in\CF_{p'}(N,b,\mathcal{O})$. Then $\tilde{\phi}(g)=p\phi(g)=0$ for all $g\in N\backslash N_{p'}$ and $\tilde{\phi}(g)=0$ for all $g\in G\backslash N$ and so $\tilde{\phi}\in\CF_{p'}(G,B,\mathcal{O})$. Therefore $I_K(\tilde{\phi})\in\CF_{p'}(G',B',\mathcal{O})$ and by the previous paragraph $(I_{N,N'})_K(\phi)\in\CF_{p'}(N',b',\mathcal{O})$ and so $(I_{N,N'})_K$ induces an isomorphism between $\CF_{p'}(N,b,\mathcal{O})$ and $\CF_{p'}(N',b',\mathcal{O})$ and hence it satisfies both the properties of a perfect isometry.
\end{proof}

\begin{lem}\label{lem:isomcent}
Let $G$ and $G'$ be finite groups, $B$ and $B'$ blocks of $\cO G$ and $\cO G'$ respectively and $I:\mathbb{Z}\Irr(B)\to\mathbb{Z}\Irr(B')$ a perfect isometry.

(i) The $K$-algebra isomorphism between $Z(KB)$ and $Z(KB')$ given by the bijection of character idempotents induced by $I$ induces an $\cO$-algebra isomorphism $\phi_I:Z(B)\to Z(B')$.

(ii) Suppose further that $I$ satisfies the conditions of Proposition~\ref{prop:index_p}. Then $\phi_{I_{N,N'}}=\phi_I|_{Z(b)}$.
\end{lem}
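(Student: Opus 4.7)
My plan is to handle (i) and (ii) separately: part (i) is essentially an appeal to the standard centre-isomorphism output of perfect isometries, and part (ii) is then a direct compatibility computation built on Proposition~\ref{prop:index_p}.

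For (i), the map $\phi_I$ on $Z(KB)=\bigoplus_{\chi\in\Irr(B)}Ke_\chi$ is defined by permuting primitive central idempotents via the bijection $\chi\mapsto\psi_\chi$ induced by $I$ (writing $I(\chi)=\epsilon_\chi\psi_\chi$), and is manifestly a $K$-algebra isomorphism onto $Z(KB')$. The substantive claim is integrality, $\phi_I(Z(B))\subseteq Z(B')$. Using the identification $Z(B)=Z(KB)\cap\cO G$, I would show that for $z\in Z(B)$ the coefficient
\begin{align*}
\phi_I(z)_{h}=\frac{1}{|G'|}\sum_{\chi\in\Irr(B)}\omega_\chi(z)\psi_\chi(1)\psi_\chi(h^{-1})\qquad(h\in G')
\end{align*}
lies in $\cO$. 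The natural tool is Brou\'e's perfect element $\mu_I(g,h)=\sum_\chi\chi(g)I(\chi)(h^{-1})$ and its divisibilities $\mu_I(g,h)/|C_G(g)|\in\cO$; these are equivalent to the combined $\CF$ and $\CF_{p'}$ integrality in the definition of perfect isometry and have been shown by Brou\'e~\cite{br90} to force exactly the centre isomorphism we want, so the cleanest route is to invoke that result.

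For (ii), take $z\in Z(b)$; by Remark~\ref{rem:inner} we have $z\in Z(B)$. The crucial observation is that for each $\chi\in\Irr(b)$ with extensions $\chi_1,\ldots,\chi_p$ to $G$ given by Proposition~\ref{prop:index_p}(i), the $KG$-module affording $\chi_i$ restricts to the $KN$-module affording $\chi$, so $\omega_{\chi_i}(z)=\omega_\chi(z)$ for every $i$. Writing $I_{N,N'}(\chi)=\epsilon_\chi\psi$ and letting $\psi_1,\ldots,\psi_p$ be the extensions of $\psi$ to $G'$ (so $I(\chi_i)=\epsilon_\chi\psi_i$ by the hypothesis on $I$), we obtain
\begin{align*}
\phi_I(z)=\sum_{\chi\in\Irr(b)}\omega_\chi(z)\sum_{i=1}^{p}e_{\psi_i}.
\end{align*}
A short computation using $\psi_i(1)=\psi(1)$, the fact that the linear characters of $G'/N'$ sum to $p$ on $N'$ and to $0$ off $N'$, and $|G'|=p|N'|$, gives $\sum_{i=1}^{p}e_{\psi_i}=e_\psi$ inside $KG'$. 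Hence $\phi_I(z)=\sum_{\chi\in\Irr(b)}\omega_\chi(z)e_\psi=\phi_{I_{N,N'}}(z)$.

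The main obstacle is (i): the naive transport of class functions via $\phi\mapsto\hat\phi$ and $I_K$ produces a centre map differing from $\phi_I$ by the scalar $\epsilon_\chi|G'|\chi(1)/(|G|\psi_\chi(1))$ on each $e_\chi$-summand, so integrality does not transfer formally and one must use the full perfect-isometry structure rather than just the $\CF$ condition. Part (ii) is then essentially bookkeeping, and the fact that $\phi_I|_{Z(b)}$ actually lands in $Z(b')$ (rather than merely $Z(B')$) is read off the support of the computed expression lying in $\{e_\psi:\psi\in\Irr(b')\}$.
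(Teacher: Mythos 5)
Your proof is correct and follows essentially the same route as the paper: part (i) is delegated to Brou\'e's result in~\cite{br90}, and part (ii) hinges on the primitive central idempotent decompositions $e_\chi=\sum_{i=1}^{p}e_{\chi_i}$ and $e_\psi=\sum_{i=1}^{p}e_{\psi_i}$, which is exactly what the paper records; you simply unpack the consequence more explicitly via the central characters $\omega_\chi$, whereas the paper states the idempotent identities and leaves the remaining bookkeeping to the reader.
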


\begin{proof}$ $
(i) This is proved in~\cite{br90}.

(ii) Let $\chi\in\Irr(b)$ and $\pm I(\chi)=\psi\in\Irr(b')$. Then, adopting the notation of Proposition~\ref{prop:index_p}, we have that
\begin{align*}
e_\chi=e_{\chi_1}+\dots+e_{\chi_p}\text{ and }e_\psi=e_{\psi_1}+\dots+e_{\psi_p},
\end{align*}
where $e_\varphi$ is the block idempotent of $\varphi$ in the appropriate group algebra over $K$. The statement now follows from the definitions of $I_{N,N'}$ and $\phi_I$.
\end{proof}

Let $n$ be a positive integer. We now work towards constructing all the perfect self-isometries of $\mathcal{O}(C_{2^n}\times A_4)$. These will ultimately be used in Theorem~\ref{index2theorem}. From now on we assume $p=2$. Let $\omega\in K$ be a primitive $3^{\operatorname{rd}}$ root of unity. We recall the character table of $A_4$, where we also set up some labelling of characters.
\begin{align*}
\begin{tabular}{|c||c|c|c|c|} \hline
 & $()$ & $(12)(34)$ & $(123)$ & $(132)$ \\ \hline
$\chi_1$ & $1$ & $1$ & $1$ & $1$ \\
$\chi_2$ & $1$ & $1$ & $\omega$ & $\omega^2$ \\
$\chi_3$ & $1$ & $1$ & $\omega^2$ & $\omega$ \\
$\chi_4$ & $3$ & $-1$ & $0$ & $0$ \\ \hline
\end{tabular}
\end{align*}

\begin{prop}\label{prop:self_A4}
The perfect self-isometries of $\mathcal{O}A_4$ are precisely the isometries of the form:
\begin{align*}
I_{\sigma,\epsilon}:\mathbb{Z}\Irr(A_4)&\to\mathbb{Z}\Irr(A_4)\\
\chi_j&\mapsto\epsilon\delta_j\delta_{\sigma(j)}\chi_{\sigma(j)}
\end{align*}
for $1\leq j\leq 4$, where $\sigma\in S_4$, $\epsilon\in\{\pm1\}$ and $\delta_1=\delta_2=\delta_3=-\delta_4=1$. Hence the group of perfect self-isometries is isomorphic to $C_2 \times S_4$.
\end{prop}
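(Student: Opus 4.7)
The plan is to classify the perfect self-isometries of $\mathcal{O}A_4$ in three stages: first reduce to signed permutations of $\Irr(A_4)$, then pin down the signs using the $\CF_{p'}$-condition, and finally verify that each resulting map $I_{\sigma,\epsilon}$ really is perfect and compute the composition law.

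Since $\Irr(A_4)$ is an orthonormal basis of $\mathbb{Z}\Irr(A_4)$, any $\mathbb{Z}$-linear isometry has the form $I(\chi_j) = \epsilon_j\chi_{\sigma(j)}$ for some $\sigma \in S_4$ and $\epsilon_j \in \{\pm 1\}$. The principal (and only) $2$-block of $A_4$ has decomposition matrix with columns $(1,0,0,1)^T,(0,1,0,1)^T,(0,0,1,1)^T$, so $\Phi_j = \chi_j + \chi_4$ for $j = 1,2,3$. By the remark following the definition of perfect isometry, the $\CF_{p'}$-half of perfectness amounts to $I$ preserving the lattice
\[
\mathbb{Z}\prj(\mathcal{O}A_4) = \Big\{\sum_{j=1}^4 a_j\chi_j : a_j \in \mathbb{Z},\ a_1+a_2+a_3 = a_4\Big\}.
\]
I apply $I$ to each $\Phi_j$ and case-split on whether $\sigma(4) = 4$ or $\sigma(4) = j_0 \in \{1,2,3\}$ with $\sigma(j_1) = 4$: the former forces $\epsilon_1 = \epsilon_2 = \epsilon_3 = \epsilon_4$, while the latter forces $\epsilon_{j_1} = \epsilon_4$ and $\epsilon_j = -\epsilon_4$ for $j \in \{1,2,3\}\setminus\{j_1\}$. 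In both cases, choosing $\epsilon \in \{\pm 1\}$ suitably, the sign pattern collapses to $\epsilon_j = \epsilon\delta_j\delta_{\sigma(j)}$, so $I = I_{\sigma,\epsilon}$.

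Conversely, I would check each $I_{\sigma,\epsilon}$ is perfect via Brou\'e's criterion: $I$ is perfect iff the virtual character $\mu_I(g,h) := \sum_\chi I(\chi)(g)\overline{\chi(h)}$ on $A_4 \times A_4$ satisfies $\mu_I(g,h)/|C_{A_4}(g)|, \mu_I(g,h)/|C_{A_4}(h)| \in \mathcal{O}$ for all $g,h$ and vanishes whenever $g,h$ have non-conjugate $2$-parts. The key identity is $\chi_j(s) = \delta_j$ at $s = (12)(34)$, which combined with column orthogonality gives $\sum_j\delta_j\chi_j(g) = 4\cdot[g\sim s]$. From this one reads off
\[
\mu_{I_{\sigma,\epsilon}}(s,h) = \epsilon\sum_j\delta_j\overline{\chi_j(h)} = 4\epsilon\cdot[h\sim s]
\]
and symmetrically $\mu_{I_{\sigma,\epsilon}}(g,s) = 4\epsilon\cdot[g\sim s]$, settling every pair involving the $2$-singular class. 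Since $|C_{A_4}(t)| = 3 \in \mathcal{O}^\times$, the remaining divisibility is nontrivial only on pairs involving $e$, where direct computation yields $\mu_I(e,e) \in \{12\epsilon, -4\epsilon\}$ and $\mu_I(e,t^{\pm 1}), \mu_I(t^{\pm 1},e) \in 4\mathcal{O}$ according to the two cases. Finally the composition law $I_{\sigma_1,\epsilon_1}\circ I_{\sigma_2,\epsilon_2} = I_{\sigma_1\sigma_2,\epsilon_1\epsilon_2}$ follows at once from $\delta_j^2 = 1$, so $(\sigma,\epsilon)\mapsto I_{\sigma,\epsilon}$ is an isomorphism $S_4 \times C_2 \to \{\text{perfect self-isometries of }\mathcal{O}A_4\}$.

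The main obstacle is the Case~B sign analysis: a priori it is not obvious that a perfect self-isometry may interchange $\chi_4$ with a linear character at all, and the precise sign pattern $\epsilon\delta_j\delta_{\sigma(j)}$ emerges only from the integrality constraint carving $\mathbb{Z}\prj$ out of $\mathbb{Z}\Irr$; once that pattern is in hand, everything else reduces to the single column-orthogonality identity $\sum_j\delta_j\chi_j = 4\cdot 1_{\mathrm{cl}(s)}$.
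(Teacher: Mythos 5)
Your proposal is correct, and the forward direction (reducing to signed permutations and pinning down the sign pattern via $\mathbb{Z}\prj$) runs along the same lines as the paper, though you spell out the case analysis on $\sigma(4)$ more explicitly; effectively both arguments amount to observing that an isometry preserves $\mathbb{Z}\prj = \ker\lambda$, $\lambda(\sum a_j\chi_j)=a_1+a_2+a_3-a_4$, iff $\lambda\circ I = \pm\lambda$. Where you diverge is in the converse direction: the paper verifies that each $I_{\sigma,\epsilon}$ is perfect by writing out $\CF(A_4,\mathcal{O}A_4,\mathcal{O})$ concretely as $\{\sum a_j\chi_j : 4a_j\in\mathcal{O},\ \delta_ja_j-\delta_la_l\in\mathcal{O}\}$ and checking that each $I_{\sigma,\epsilon}$ preserves this $\mathcal{O}$-lattice, whereas you invoke Brou\'e's characterisation of perfectness via the virtual character $\mu_I$ on $A_4\times A_4$ and its integrality and separation conditions. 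Your route packages the verification into a handful of $\mu_I$ evaluations and exploits the identity $\sum_j\delta_j\chi_j = 4\cdot 1_{\mathrm{cl}(s)}$, which is elegant; the paper's route is more pedestrian but has the virtue of being self-contained and of setting up exactly the same kind of explicit lattice computation that it reuses for $\mathcal{O}C_{2^n}$ and $\mathcal{O}(C_{2^n}\times A_4)$ in Propositions~\ref{prop:self_cyclic} and Theorem~\ref{thm:C2nA4}, so the uniformity of method is a deliberate choice. One small caveat: your statement of Brou\'e's separation condition (vanishing when the $2$-parts are non-conjugate) is stronger than the standard one (vanishing when exactly one of $g,h$ is $2$-singular), though the two coincide in $A_4$ since there is only one nontrivial class of $2$-elements; you should phrase it with the standard condition to avoid accidentally claiming more than Brou\'e's theorem gives.
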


\begin{proof}
We first note that
\begin{align*}
\prj (\mathcal{O}A_4))=\{\chi_{P_1},\chi_{P_2},\chi_{P_3}\},\text{ where }\chi_{P_j}=\chi_j+\chi_4\text{ for }j=1,2,3.
\end{align*}
Therefore
\begin{align*}
\mathbb{Z}\prj (\mathcal{O}A_4))=\left\{\sum_{j=1}^4a_j\chi_j:a_j\in\mathbb{Z},\sum_{j=1}^3a_j=a_4\right\}.
\end{align*}
So the isometries $\mathbb{Z}\Irr(\mathcal{O}A_4)\to\mathbb{Z}\Irr(\mathcal{O}A_4)$ that leave $\mathbb{Z}\prj (\mathcal{O}A_4))$ invariant are precisely the permutations of $\{\chi_1,\chi_2,\chi_3,-\chi_4\}$ together with their negatives. These are precisely the $I_{\sigma,\epsilon}$'s.
\newline
\newline
We now describe $\CF(A_4,\mathcal{O}A_4,\mathcal{O})$. Let $\sum_{j=1}^4a_j\chi_j\in\CF(A_4,\mathcal{O}A_4,K)$. Now by evaluating at various elements of $A_4$ we get that $\sum_{j=1}^4a_j\chi_j\in\CF(A_4,\mathcal{O}A_4,\mathcal{O})$ if and only if
\begin{align*}
a_1+a_2+a_3+3a_4&\in\mathcal{O},\\
a_1+a_2+a_3-a_4&\in\mathcal{O},\\
a_1+\omega a_2+\omega^2a_3&\in\mathcal{O}\\
\text{and }a_1+\omega^2a_2+\omega a_3&\in\mathcal{O}.
\end{align*}
These conditions are equivalent to
\begin{align*}
4a_1\in\mathcal{O},4a_2\in\mathcal{O},4a_3\in\mathcal{O},4a_4\in\mathcal{O}\\
\text{and }\delta_ja_j-\delta_la_l\in\mathcal{O}\text{ for all }1\leq j,l\leq 4.
\end{align*}
One can now check that all the $I_{\sigma,\epsilon}$'s leave $\CF(A_4,\mathcal{O}A_4,\mathcal{O})$ invariant and the proposition is proved.
\end{proof}

Let $\zeta\in K$ be a primitive $(2^{n})^{\operatorname{th}}$ root of unity.

\begin{lem}\label{lem:roots_in_o}
Let $m$ be a positive integer and suppose $\sum_{i=0}^{2^m-1}\zeta^{l_i}\in2^m\mathcal{O}$, where $l_i\in\mathbb{Z}$ for $0\leq i<2^m$. Then either $\zeta^{l_0}=\dots=\zeta^{l_{2^m-1}}$ or $\sum_{i=0}^{2^m-1}\zeta^{l_i}=0$.
\end{lem}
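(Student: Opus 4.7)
The plan is to expand the sum in the $\mathbb{Z}_2$-basis $\{1,\zeta,\ldots,\zeta^{2^{n-1}-1}\}$ of $\mathbb{Z}_2[\zeta]$ and exploit the tight bound imposed by the hypothesis. Using $\zeta^{2^{n-1}}=-1$, for each $i$ I would write $\zeta^{l_i}=\epsilon_i\zeta^{r_i}$ with $\epsilon_i\in\{\pm1\}$ and $0\leq r_i<2^{n-1}$, and then collect like terms to obtain
\[
\sum_{i=0}^{2^m-1}\zeta^{l_i}=\sum_{r=0}^{2^{n-1}-1}c_r\zeta^r,
\]
where $c_r=a_r-b_r$, with $a_r$ (resp.\ $b_r$) the number of indices $i$ for which $r_i=r$ and $\epsilon_i=+1$ (resp.\ $-1$); note in particular that $\sum_r(a_r+b_r)=2^m$.

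The next step is to show that $2^m\mid c_r$ for every $r$. The hypothesis gives $\sum c_r\zeta^r\in 2^m\mathcal{O}\cap\mathbb{Q}_2(\zeta)$, and dividing by $2^m$ yields an element of $\mathcal{O}\cap\mathbb{Q}_2(\zeta)=\mathbb{Z}_2[\zeta]$. Hence $\sum c_r\zeta^r\in 2^m\mathbb{Z}_2[\zeta]$, and since $\{1,\zeta,\ldots,\zeta^{2^{n-1}-1}\}$ is a $\mathbb{Z}_2$-basis of $\mathbb{Z}_2[\zeta]$, each coefficient must lie in $2^m\mathbb{Z}_2\cap\mathbb{Z}=2^m\mathbb{Z}$.

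To finish, the inequality $|c_r|\leq a_r+b_r\leq 2^m$ combined with $2^m\mid c_r$ forces $c_r\in\{-2^m,0,2^m\}$. If some $c_r=\pm 2^m$, then $a_r+b_r$ must saturate to $2^m$, and moreover every index must contribute to a single sign ($a_r=2^m$ or $b_r=2^m$), in which case every $\zeta^{l_i}$ equals $\zeta^r$ or $-\zeta^r$ respectively; otherwise every $c_r=0$ and the sum vanishes. The only place needing mild care is the identification $\mathcal{O}\cap\mathbb{Q}_2(\zeta)=\mathbb{Z}_2[\zeta]$, which is the standard fact that $\mathbb{Z}_2[\zeta]$ is the ring of integers of the totally ramified cyclotomic extension $\mathbb{Q}_2(\zeta)/\mathbb{Q}_2$ (with uniformiser $1-\zeta$); everything else is a straightforward pigeonhole on non-negative integers.
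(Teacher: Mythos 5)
Your proof is correct, and it takes a genuinely different route from the paper's. The paper uses a global argument: it forms the norm $\prod_{\sigma\in\operatorname{Gal}(\mathbb{Q}(\zeta)/\mathbb{Q})}\sigma(\sum_i\zeta^{l_i})$, observes that it lies in $2^{m|\operatorname{Gal}|}\mathcal{O}\cap\mathbb{Z}=2^{m|\operatorname{Gal}|}\mathbb{Z}$, and then bounds each Galois conjugate by the archimedean triangle inequality $\bigl|\sigma(\sum_i\zeta^{l_i})\bigr|\leq 2^m$, with equality iff the summands coincide; the two constraints force the norm to be $0$ or $\pm 2^{m|\operatorname{Gal}|}$, yielding the dichotomy. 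You instead work purely $2$-adically: you expand in the $\mathbb{Z}_2$-basis $\{1,\zeta,\dots,\zeta^{2^{n-1}-1}\}$ of $\mathbb{Z}_2[\zeta]=\mathcal{O}\cap\mathbb{Q}_2(\zeta)$ (using that $\mathbb{Q}_2(\zeta)/\mathbb{Q}_2$ is totally ramified with integer ring $\mathbb{Z}_2[\zeta]$), deduce $2^m\mid c_r$ for every coefficient, and then pigeonhole against $|c_r|\leq a_r+b_r\leq\sum_r(a_r+b_r)=2^m$. The paper's proof is shorter but mixes the $p$-adic hypothesis with an archimedean estimate, which quietly relies on the $2$-adic valuation of $\mathbb{Q}(\zeta)$ being Galois-invariant (true here since $2$ is totally ramified) to conclude that each conjugate is again in $2^m\mathcal{O}$; your proof stays entirely inside the $2$-adic world and is slightly more transparent on that point. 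It also yields a little extra information, namely which basis coefficient is saturated in the nonzero case, though the lemma does not need it. The one fact you invoke that the paper does not, namely $\mathcal{O}\cap\mathbb{Q}_2(\zeta)=\mathbb{Z}_2[\zeta]$, is standard and unproblematic given that $\mathcal{O}$ is a complete discrete valuation ring containing $\mathbb{Z}_2$ and $\zeta$. Both proofs are correct; yours is a fair alternative.
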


\begin{proof}
We consider $\mathbb{Q}$ as embedded in $K$. First note that
\begin{align*}
\prod_{\sigma\in\operatorname{Gal}(\mathbb{Q}(\zeta)/\mathbb{Q})}\left(\sigma\left(\sum_{i=0}^{2^m-1}\zeta^{l_i}\right)\right)\in2^{m|\operatorname{Gal}(\mathbb{Q}(\zeta)/\mathbb{Q})|}\mathcal{O}\cap\mathbb{Z}=2^{m|\operatorname{Gal}(\mathbb{Q}(\zeta)/\mathbb{Q})|}\mathbb{Z}.
\end{align*}
However, for each $\sigma\in\operatorname{Gal}(\mathbb{Q}(\zeta)/\mathbb{Q})$ we have
\begin{align*}
\left|\sigma\left(\sum_{i=0}^{2^m-1}\zeta^{l_i}\right)\right|\leq2^m
\end{align*}
with equality if and only if $\zeta^{l_0}=\dots=\zeta^{l_{2^m-1}}$, where $|\quad|$ denotes the usual norm in $K$. The claim now follows.
\end{proof}

Let $x$ be a generator of $C_{2^n}$. For $0\leq i<2^n$ we define $\theta_i\in\Irr(C_{2^n})$ by $\theta_i(x)=\zeta^i$.

\begin{prop}\label{prop:self_cyclic}
The perfect self-isometries of $\mathcal{O}C_{2^n}$ are precisely the isometries of the form:
\begin{align*}
I_{j,l,\epsilon}:\mathbb{Z}\Irr(C_{2^n})&\to\mathbb{Z}\Irr(C_{2^n})\\
\theta_i&\mapsto\epsilon\theta_{i+l}^j
\end{align*}
for $0\leq i<2^n$, where $0\leq j,l<2^n$ with $j$ odd, $\epsilon\in\{\pm1\}$ and $i+l$ is considered modulo $2^n$. Moreover each $I_{j,l,1}$ is induced by the $\mathcal{O}$-algebra automorphism $x\mapsto\zeta^{-l}x^{\frac{1}{j}}$. Hence the group of perfect self-isometries is isomorphic to $C_2\times[(\mathbb{Z}/2^n)^\times\ltimes(\mathbb{Z}/2^n)]$, where the action of $(\mathbb{Z}/2^n)^\times$ on $\mathbb{Z}/2^n$ is given by multiplication.
\end{prop}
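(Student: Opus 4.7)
The plan is first to show that every perfect self-isometry of $\mathcal{O}C_{2^n}$ has the form $\theta_i \mapsto \epsilon \theta_{\tau(i)}$ for a common sign $\epsilon \in \{\pm 1\}$ and a permutation $\tau$ of $\mathbb{Z}/2^n$, then to force $\tau$ to be affine by exploiting Lemma~\ref{lem:roots_in_o}, and finally to verify the converse together with the algebra-automorphism description and the group structure. The main difficulty will lie in the middle step: it is Lemma~\ref{lem:roots_in_o} combined with a Fourier-inversion observation that effectively rules out all non-affine permutations.

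Since $C_{2^n}$ is a $2$-group, $\mathcal{O}C_{2^n}$ has a unique block with a single simple module, so $\prj(\mathcal{O}C_{2^n})$ consists only of the regular character and $\mathbb{Z}\prj(\mathcal{O}C_{2^n}) = \mathbb{Z}\sum_i \theta_i$. Any isometry $I:\mathbb{Z}\Irr(\mathcal{O}C_{2^n}) \to \mathbb{Z}\Irr(\mathcal{O}C_{2^n})$ sends each $\theta_i$ to $\epsilon_i \theta_{\tau(i)}$ for some permutation $\tau$ and signs $\epsilon_i$, and preservation of $\mathbb{Z}\prj$ immediately forces all $\epsilon_i$ to coincide with a common $\epsilon$. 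To pin down $\tau$ I test $I$ on the characteristic function
\begin{align*}
\delta_{x^{k_0}} = \frac{1}{2^n}\sum_i \zeta^{-ik_0}\theta_i \in \CF(C_{2^n},\mathcal{O}C_{2^n},\mathcal{O}).
\end{align*}
Evaluating $I(\delta_{x^{k_0}})$ at $x$ shows $\sum_i \zeta^{\tau(i)-ik_0}\in 2^n\mathcal{O}$ for every $k_0$, so Lemma~\ref{lem:roots_in_o} forces, for each $k_0$, either $\tau(i)-ik_0$ to be constant in $i$ modulo $2^n$ or $\sum_i \zeta^{\tau(i)-ik_0}=0$. The second alternative cannot hold for every $k_0$, because $k_0 \mapsto \sum_i \zeta^{\tau(i)-ik_0}$ is the discrete Fourier transform of the function $i\mapsto\zeta^{\tau(i)}$, which is not identically zero. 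Hence some $k_0$ realises the first alternative, giving $\tau(i)=k_0 i + c$. Bijectivity of $\tau$ then forces $k_0$ odd, and setting $j=k_0$ and $l = cj^{-1}\pmod{2^n}$ yields $\tau(i)=j(i+l)$.

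For the converse direction, the direct identity $I_{j,l,\epsilon}(\phi)(x^k)=\epsilon\zeta^{jlk}\phi(x^{jk})$ shows that $\CF(C_{2^n},\mathcal{O}C_{2^n},\mathcal{O})$ is preserved; since $C_{2^n}$ is a $2$-group, $\CF_{p'}$ is the rank-one $\mathcal{O}$-module generated by $\delta_1$ and is trivially preserved up to sign, so every $I_{j,l,\epsilon}$ is a perfect self-isometry. The \emph{moreover} statement follows from a direct computation: for the $\mathcal{O}$-algebra automorphism $\alpha:x\mapsto\zeta^{-l}x^{1/j}$ of $\mathcal{O}C_{2^n}$, expanding $\alpha(e_i)$ for the primitive central idempotent $e_i = 2^{-n}\sum_k\zeta^{-ik}x^k$ and re-indexing by $k = jm$ gives $\alpha(e_i) = e_{j(i+l)}$, so Lemma~\ref{lem:isomcent} identifies the induced isometry with $I_{j,l,1}$. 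Finally, the composition law $(\tau_2\circ\tau_1)(i) = j_1 j_2 i + j_1 j_2 l_1 + j_2 l_2$, together with multiplicativity of the signs $\epsilon$, identifies the group of perfect self-isometries with $C_2\times[(\mathbb{Z}/2^n)^\times\ltimes(\mathbb{Z}/2^n)]$, as claimed.
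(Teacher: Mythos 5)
Your proof is correct and follows essentially the same route as the paper: both arguments hinge on applying the perfect isometry to characteristic functions, invoking Lemma~\ref{lem:roots_in_o}, and concluding that the underlying permutation of $\{0,\dots,2^n-1\}$ must be affine. The only difference is a Fourier-dual presentation of the key step --- the paper fixes the class function $\frac{1}{2^n}\sum_i\zeta^{-i}\theta_i$ and varies the evaluation point $g$ to produce a generator $y$ on which the values align, whereas you fix the evaluation point $x$ and vary the characteristic function $\delta_{x^{k_0}}$, ruling out the all-zero alternative via Fourier inversion rather than via linear independence of characters --- and both reach the same affine form $\tau(i)=j(i+l)$ with $j$ odd.
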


\begin{proof}
We know $\prj (\mathcal{O}C^{2^n})=\left\{\sum_{i=0}^{2^n-1}\theta_i\right\}$ so any perfect isometry $I:\mathbb{Z}\Irr(\mathcal{O}C_{2^n})\to\mathbb{Z}\Irr(\mathcal{O}C_{2^n})$ must have all signs the same. Therefore we need only check what permutations of the $\theta_i$'s leave $\CF(C_{2^n},\mathcal{O}C_{2^n},\mathcal{O})$ invariant. We first note that
\begin{align*}
\sum_{i=0}^{2^n-1}\zeta^{-i}\theta_i(g)=
\begin{cases}
2^n&\text{ if }g=x,\\
0&\text{ if }g\neq x,
\end{cases}
\end{align*}
for all $g\in C_{2^n}$. So $\frac{1}{2^n}\sum_{i=0}^{2^n-1}\zeta^{-i}\theta_i\in\CF(C_{2^n},\mathcal{O}C_{2^n},\mathcal{O})$. Now consider a perfect isometry induced by $\sigma$, a permutation of $\{0,1,\dots,2^n-1\}$. Then we must have $\frac{1}{2^n}\sum_{i=0}^{2^n-1}\zeta^{-i}\theta_{\sigma(i)}\in\CF(C_{2^n},\mathcal{O}C_{2^n}\mathcal{O})$. So
\begin{align*}
\sum_{i=0}^{2^n-1}\zeta^{-i}\theta_{\sigma(i)}(g)\in2^n\mathcal{O},
\end{align*}
for all $g\in C_{2^n}$. Therefore, by Lemma~\ref{lem:roots_in_o}, we have that for each $g\in C_{2^n}$ that this sum is either zero or all the $\zeta^{-i}\theta_{\sigma(i)}(x)$'s are equal. Certainly they can't all be zero, as we have a non-zero linear combination of characters. 
Therefore there exists $y\in C_{2^n}$ such that
\begin{align*}
\theta_{\sigma(0)}(y)=\zeta^{-1}\theta_{\sigma(1)}(y)=\dots=\zeta^{-(2^n-1)}\theta_{\sigma(2^n-1)}(y).
\end{align*}
Certainly $y$ must generate $C_{2^n}$ as it takes $2^n$ distinct values on the elements of $C_{2^n}$. Define $0\leq j,l<2^n$ by $x=y^j$ and $\theta_{\sigma(0)}(y)=\zeta^l$ and note that $j$ must be odd. Then
\begin{align*}
\theta^j_{m+l}(x)=\zeta^{j(m+l)}=[\zeta^m\theta_{\sigma(0)}(y)]^j=[\theta_{\sigma(m)}(y)]^j=\theta_{\sigma(m)}(y^j)=\theta_{\sigma(m)}(x),
\end{align*}
for all $0\leq m<2^n$. Therefore, $\theta_{\sigma(m)}=\theta^j_{m+l}$. Finally we note that the isometry $\theta_m\mapsto\theta_{m+l}^j$ is induced by the $\mathcal{O}$-algebra automorphism $x\mapsto\zeta^{-l}x^{\frac{1}{j}}$ and so is indeed a perfect isometry and the proof is complete.
\end{proof}

For the following theorem we adopt the notation of Propositions~\ref{prop:self_A4} and~\ref{prop:self_cyclic}.

\begin{thm}\label{thm:C2nA4}
Every perfect self-isometry of $\mathcal{O}(C_{2^n}\times A_4)$ is of the form
\begin{align*}
(I_{j,l,1},I_{\sigma,\epsilon}):(\mathbb{Z}\Irr(\mathcal{O}C_{2^n})\otimes_{\mathbb{Z}}\mathbb{Z}\Irr(\mathcal{O}A_4))\to(\mathbb{Z}\Irr(\mathcal{O}C_{2^n})\otimes_{\mathbb{Z}}\mathbb{Z}\Irr(\mathcal{O}A_4)),
\end{align*}
where $\sigma\in S_4$, $\epsilon\in\{\pm1\}$ and $0\leq j,l<2^n$ with $j$ odd.
\end{thm}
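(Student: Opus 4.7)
The plan is to write $I(\theta_i \otimes \chi_k) = \epsilon(i, k) \theta_{\alpha(i, k)} \otimes \chi_{\beta(i, k)}$ for a bijection $(\alpha, \beta)$ of the index set and signs $\epsilon$, then first extract the $A_4$-part of $I$ using $\mathbb{Z}\prj$-preservation, then extract a $C_{2^n}$-part for each $k$ using $\CF(\mathcal{O})$-preservation, and finally show the $C_{2^n}$-parts agree across $k$. The projective indecomposables of $\mathcal{O}(C_{2^n} \times A_4)$ have characters $Q_k = (\sum_i \theta_i) \otimes (\chi_k + \chi_4)$ for $k \in \{1, 2, 3\}$, and these span $\mathbb{Z}\prj$. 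Expanding $I(Q_k) = \sum_j c_j^k Q_j$ and matching coefficients of $\theta_p \otimes \chi_q$ (using that each $Q_j$ forces the $\theta_p$-coefficient of $\chi_l$ to be independent of $p$), one sees that both $\beta(i, k)$ and $\epsilon(i, k)$ must be independent of $i$: define $\sigma(k) := \beta(i, k)$ and $\eta_k := \epsilon(i, k)$. The residual relations among the $c_j^k$'s force $\sigma$ to be a permutation in $S_4$ and $\eta_k = \epsilon\, \delta_k \delta_{\sigma(k)}$ for a common sign $\epsilon \in \{\pm 1\}$, so the $A_4$-part of $I$ is exactly the $I_{\sigma, \epsilon}$ of Proposition~\ref{prop:self_A4}.

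For each $k$, the map $\tilde{I}_k : \theta_i \mapsto \theta_{\alpha(i, k)}$ is a sign-free isometry of $\mathbb{Z}\Irr(\mathcal{O}C_{2^n})$ fixing $\sum_i \theta_i$, so it preserves $\mathbb{Z}\prj(\mathcal{O}C_{2^n})$. Given $\phi \in \CF(C_{2^n}, \mathcal{O}C_{2^n}, \mathcal{O})$, the class function $\phi \otimes \chi_k$ lies in $\CF(C_{2^n} \times A_4, \mathcal{O}(C_{2^n} \times A_4), \mathcal{O})$, so its image $\eta_k \tilde{I}_k(\phi) \otimes \chi_{\sigma(k)}$ does too; evaluating at $(g, 1)$ and dividing by the unit $\chi_{\sigma(k)}(1) \in \{1, 3\}$ yields $\tilde{I}_k(\phi) \in \CF(C_{2^n}, \mathcal{O}C_{2^n}, \mathcal{O})$, so $\tilde{I}_k$ is a perfect self-isometry. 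By Proposition~\ref{prop:self_cyclic}, $\tilde{I}_k = I_{j_k, l_k, 1}$ for some odd $j_k$ and some $l_k \in \{0, \ldots, 2^n - 1\}$ (the sign is positive since $\tilde{I}_k$ fixes the projective character).

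It remains to show $j_k$ and $l_k$ do not depend on $k$. Apply $I$ to the class indicator $\Phi = f_x \otimes f_{(12)(34)}$, where $f_x = \frac{1}{2^n}\sum_i \zeta^{-i}\theta_i$ and $f_{(12)(34)} = \frac{1}{4}\sum_k \delta_k \chi_k$, and use the identity $\sum_i \zeta^{-i} \theta_{j_k(i+l_k)} = 2^n \zeta^{l_k} f_{x^{j_k^{-1}}}$ to compute
\[
I(\Phi) = \tfrac{1}{4}\sum_k \delta_k \eta_k \zeta^{l_k}\, f_{x^{j_k^{-1}}} \otimes \chi_{\sigma(k)},
\]
whose value at $(x^t, h')$ is $\tfrac{1}{4}\sum_{k :\, j_k^{-1} = t} \delta_k \eta_k \zeta^{l_k} \chi_{\sigma(k)}(h')$ and must lie in $\mathcal{O}$ for every $t$ and every $h' \in A_4$. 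At $h' = 1$ the factor $\chi_{\sigma(k)}(1) \in \{1, 3\}$ is a $2$-adic unit, and at $h' = (123), (132)$ the factor $\chi_{\sigma(k)}(h')$ introduces the $2$-adic units $\omega, \omega^2$ (or vanishes when $\sigma(k) = 4$); in each case the question reduces to whether a partial sum of $2$-power roots of unity lies in $4\mathcal{O}$. Applying Lemma~\ref{lem:roots_in_o} rules out first any non-constant pattern for the $j_k$'s and then any non-constant pattern for the $l_k$'s, yielding $I = I_{j, l, 1} \otimes I_{\sigma, \epsilon}$.

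The main obstacle is the case analysis in the last step: one must exclude several multiplicity patterns (in particular the $(2, 2)$-pattern for $\{j_k\}$ and the vanishing-sum possibility for $\{l_k\}$), and the crucial mechanism is that the denominator $4$ cannot be cancelled by a non-trivial $\mathbb{Z}$-combination of $2$-power roots of unity of small norm (Lemma~\ref{lem:roots_in_o}), while $1 - \omega$ and $\omega - \omega^2$ are $2$-adic units and so cannot themselves contribute any cancellation. The residual possibility $\sum_k \zeta^{l_k} = 0$ is then eliminated by an archimedean-norm argument, since it would force $|\zeta^{l_4}| = 3$.
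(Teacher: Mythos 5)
Your route is essentially the same as the paper's — pin down the $A_4$-part via preservation of $\mathbb{Z}\prj$, then use preservation of $\CF(\cdot,\mathcal{O})$ together with Lemma~\ref{lem:roots_in_o} to rigidify the $C_{2^n}$-part — but the two middle steps are organised differently, and one of them has a real gap.

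The gap is in the first step. You claim that expanding $I(Q_k)=\sum_j c_j^k Q_j$ and matching coefficients of $\theta_p\otimes\chi_q$ immediately shows $\beta(i,k)$ is independent of $i$. It does not: the coefficient of $\theta_p\otimes\chi_q$ in $I(Q_k)=\sum_i I(\theta_i\otimes\chi_k)+\sum_i I(\theta_i\otimes\chi_4)$ receives contributions from both $k$-terms and $4$-terms, and the constraint that this is independent of $p$ only says that the two families jointly cover the fibres evenly; a priori $\beta(\cdot,k)$ could take two values with $\beta(\cdot,4)$ filling in the complement. The paper closes exactly this gap with the sets $X_m:=\{\beta(i,m):i\}$ and a counting argument: from the norm constraint $\|I(\chi_{P_l})\|^2=2\cdot 2^n$ one only gets $|X_m|\le 2$, and one must then show that $|X_1|=2$ forces $X_1,X_2,X_4$ all equal, which is impossible because $I\bigl((\sum_i\theta_i)\otimes(\chi_1+\chi_2+\chi_4)\bigr)$ has $3\cdot 2^n$ distinct constituents with multiplicity $\pm 1$ but would be supported on at most $2\cdot 2^n$ characters. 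You should supply an argument of this kind before asserting $\beta(i,k)=\sigma(k)$.

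The rest is correct and is a genuine, though modest, variation. Your step of extracting, for each $k$, a perfect self-isometry $\tilde I_k$ of $\mathcal{O}C_{2^n}$ (by evaluating at $h'=1$ and cancelling the $2$-adic unit $\chi_{\sigma(k)}(1)\in\{1,3\}$) and invoking Proposition~\ref{prop:self_cyclic} four times is a nice reorganisation; the paper instead waits until the very end to apply Proposition~\ref{prop:self_cyclic} once. Your use of the single class indicator $\Phi=f_x\otimes f_{(12)(34)}$ is a clean packaging of the paper's choice $\tfrac{1}{4}\theta_j\otimes\sum_m\chi_m$ evaluated at $(x,1),(x,(123)),(x,(132))$: they lead to the same constraints modulo $4\mathcal{O}$, and in both arguments the engine is Lemma~\ref{lem:roots_in_o} together with the fact that $1-\omega$ and $\omega-\omega^2$ are $2$-adic units. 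The price you pay is more case analysis, since you must rule out non-constant $(j_k)$ patterns (the $(2,2)$ case is the one that survives the $h'=1$ test and needs the difference of the $(123)$ and $(132)$ values, or of $h'=1$ and $h'=(12)(34)$ when $\sigma(k)=4$ is in the pair) before even addressing the $(l_k)$; the paper avoids this by repeatedly normalising with compositions $(I_{1,1,1},I_{\sigma,1})$, $(I_{1,1,1},I_{1,\epsilon})$, $(I_{s,t,1},I_{1,1})$. Your sketch of that case analysis is correct in outline; just be aware that it is the bulk of the work and deserves to be written in full.
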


\begin{proof}
The projective indecomposable characters are
\begin{align*}
\prj (\mathcal{O}(C_{2^n}\times A_4))=\{\chi_{P_1},\chi_{P_2},\chi_{P_3}\},\text{ where }\chi_{P_j}=\left(\sum_{i=0}^{2^n-1}\theta_i\right)\otimes\left(\chi_j+\chi_4\right).
\end{align*}
Let $I$ be a perfect self-isometry of $\mathcal{O}(C_{2^n}\times A_4)$. By counting constituents we see that
\begin{align}\label{align:im}
I(\chi_{P_l})=\pm\chi_{P_1},\pm\chi_{P_2},\pm\chi_{P_3},\pm(\chi_{P_1}-\chi_{P_2}),\pm(\chi_{P_1}-\chi_{P_3})\text{ or }\pm(\chi_{P_2}-\chi_{P_3}),
\end{align}
for $1\leq l\leq 3$. Consider the set
\begin{align*}
X_m:=\left\{j\big{|}\left\langle\theta_l\otimes\chi_j,I\left(\left(\sum_{i=0}^{2^n-1}\theta_i\right)\otimes\chi_m\right)\right\rangle\neq0,\text{ for some }l\right\},
\end{align*}
for $1\leq m\leq 4$. By (\ref{align:im}) we have shown that $|X_m|=1$ or $2$ for every $1\leq m\leq 4$. If $|X_1|=2$, then by considering (\ref{align:im}) for $l=1$ we see that $X_4=X_1$. Similarly by considering $I(\chi_{P_2})$, we get that $X_2=X_4$. This is now a contradiction as then
\begin{align*}
I\left(\left(\sum_{i=0}^{2^n-1}\theta_i\right)\otimes\left(\chi_1+\chi_2+\chi_4\right)\right)
\end{align*}
has at most $2.2^n$ constituents with non-zero multiplicity. Therefore $|X_1|=1$ and so by considering $I(\chi_{P_1})$ we get that $|X_4|=1$ and then by considering $I(\chi_{P_2})$ and $I(\chi_{P_3})$ we get that $|X_2|=|X_3|=1$. Moreover, $X_1,X_2,X_3,X_4$ must all be disjoint. By composing $I$ with the perfect isometry $(I_{1,1,1},I_{\sigma,1})$, for some appropriately chosen $\sigma\in S_4$, we may assume $X_m=\{m\}$ for all $1\leq m\leq4$. Therefore $I(\chi_{P_l})=\pm\chi_{P_l}$ for $1\leq l\leq3$ and by considering
\begin{align*}
I\left(\left(\sum_{i=0}^{2^n-1}\theta_i\right)\otimes\chi_4\right),
\end{align*}
we see that in fact all these signs are the same and we may assume, possibly by composing $I$ with $(I_{1,1,1},I_{1,\epsilon})$, that
\begin{align*}
I\left(\left(\sum_{i=0}^{2^n-1}\theta_i\right)\otimes\chi_m\right)=\left(\sum_{i=0}^{2^n-1}\theta_i\right)\otimes\chi_m,
\end{align*}
for $1\leq m\leq 4$. Next we note that
\begin{align*}
\frac{1}{12}\theta_j\otimes(\chi_1+\chi_2+\chi_3+9\chi_4)\in\CF(C_{2^n}\times A_4,\mathcal{O}(C_{2^n}\times A_4),\mathcal{O}),
\end{align*}
for $0\leq j<2^n$. As $3$ is invertible in $\mathcal{O}$, this implies
\begin{align*}
\theta_j\otimes\left(\sum_{m=1}^4\chi_m\right)\in4\CF(C_{2^n}\times A_4,\mathcal{O}(C_{2^n}\times A_4),\mathcal{O}),
\end{align*}
and so
\begin{align}\label{align:im2}
I\left(\theta_j\otimes\left(\sum_{m=1}^4\chi_m\right)\right)\in4\CF(C_{2^n}\times A_4,\mathcal{O}(C_{2^n}\times A_4),\mathcal{O}),
\end{align}
for $0\leq j<2^n$. Now set $\theta_{j_m}\otimes\chi_m:=I(\theta_j\otimes\chi_m)$, for $1\leq m\leq4$. Evaluating (\ref{align:im2}) at $(x,1)$, $(x,(123))$ and $(x,(132))$ gives
\begin{align}
\zeta^{j_1}+\zeta^{j_2}+\zeta^{j_3}+\zeta^{j_4}&\in4\mathcal{O},\label{zeta1}\\
\zeta^{j_1}+\omega\zeta^{j_2}+\omega^2\zeta^{j_3}&\in4\mathcal{O},\label{zeta2}\\
\zeta^{j_1}+\omega^2\zeta^{j_2}+\omega\zeta^{j_3}&\in4\mathcal{O}\label{zeta3}.
\end{align}
Adding (\ref{zeta1}), (\ref{zeta2}) and (\ref{zeta3}) gives $3\zeta^{j_1}+\zeta^{j_4}\in4\mathcal{O}$. Now Lemma~\ref{lem:roots_in_o} tells us that $\zeta^{j_1}=\zeta^{j_4}$ as certainly $3\zeta^{j_1}+\zeta^{j_4}\neq0$. Therefore, by (\ref{zeta1}), $\zeta^{j_2}+\zeta^{j_3}\in2\mathcal{O}$. So again by Lemma~\ref{lem:roots_in_o} $\zeta^{j_2}=\zeta^{j_3}$ as $\zeta^{j_2}=-\zeta^{j_3}$ is prohibited by (\ref{zeta1}). Substituting into (\ref{zeta2}) gives $\zeta^{j_1}-\zeta^{j_2}\in4\mathcal{O}$. A final use of Lemma~\ref{lem:roots_in_o} tells us that $\zeta^{j_1}=\pm\zeta^{j_2}$ but $2\zeta^{j_1}\notin4\mathcal{O}$ and so we must have $\zeta^{j_1}=\zeta^{j_2}=\zeta^{j_3}=\zeta^{j_4}$.
\newline
\newline
We have shown that we may assume $I$ is of the form
\begin{align*}
I(\theta_j\otimes\chi_m)\mapsto\theta_{\sigma(j)}\otimes\chi_m,
\end{align*}
for $0\leq j<2^n$ and $1\leq m\leq4$, where $\sigma$ is a permutation of $\{0,1,\dots,2^n-1\}$. In particular the $\mathcal{O}$-algebra automorphism of $Z(\mathcal{O}(C_{2^n}\times A_4)$ induced by $I$ leaves $\mathcal{O}C_{2^n}$ invariant. Using Proposition~\ref{prop:self_cyclic} we can compose $I$ with $(I_{s,t,1},I_{1,1})$ for appropriately chosen $s$ and $t$ so that the automorphism induced on $\mathcal{O}C_{2^n}$ is the identity. Therefore $\sigma$ is the identity permutation, $I$ is the identity perfect isometry and the theorem is proved.
\end{proof}

\begin{lem}
\label{normaldefectlemma}
Let $B$ be a block of $\cO G$ for a finite group $G$ with normal defect group $D \cong C_{2^n} \times C_2 \times C_2$ for some $n>1$. Then $B$ is Morita equivalent to $\cO D$ or $\cO(C_{2^n} \times A_4)$.
\end{lem}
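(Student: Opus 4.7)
By Lemma \ref{inertial_quotient_lem}, the inertial quotient $E$ of $B$ is either trivial or isomorphic to $C_3$. Let $(D, b_D)$ be a maximal $B$-subpair; note that $b_D$ is a block of $C_G(D)$. Applying Fong--Reynolds, I would first replace $G$ by the stabilizer $N_G(D,b_D) = G_{b_D}$ of $b_D$; the resulting Morita equivalent block still has defect group $D$, so we may assume $b_D$ is $G$-stable and $G/C_G(D) \cong E$. Since $D$ is abelian and normal in $G$, we have $D \leq Z(C_G(D))$, so $b_D$ has central defect group and is therefore nilpotent; by Puig's theorem on nilpotent blocks (which is valid over $\cO$), $b_D$ is Morita equivalent to $\cO D$.

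If $E = 1$ then $G = C_G(D)$, so $B = b_D$ and we are done. If $E \cong C_3$, then $G/C_G(D)$ has order prime to $p = 2$. The plan here is to invoke K\"ulshammer's theorem on blocks with normal defect groups: since $b_D$ is source-algebra equivalent to $\cO D$ and is $G$-stable, $B$ is Morita equivalent to a twisted group algebra $\cO_\alpha [D \rtimes E]$ for some $\alpha \in H^2(E, k^\times)$. The Schur multiplier of $C_3$ is trivial, hence $\alpha$ is trivial and $B$ is Morita equivalent to $\cO [D \rtimes C_3]$.

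It remains to identify the group $D \rtimes C_3$. Since $\Aut(C_{2^n})$ is a $2$-group for every $n \geq 1$, the action of $C_3$ on $D$ must be trivial on the $C_{2^n}$ factor, and so must reduce to the unique (up to conjugacy) nontrivial homomorphism $C_3 \to \Aut(C_2 \times C_2) \cong S_3$. This produces $(C_2 \times C_2) \rtimes C_3 \cong A_4$, whence $D \rtimes C_3 \cong C_{2^n} \times A_4$.

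The main technical issue I would anticipate is ensuring that the K\"ulshammer-type result on blocks with normal defect group is available over $\cO$ rather than merely over $k$. This works out cleanly here because $|E|=3$ is coprime to $p=2$, so the relevant $2$-cocycles lift uniquely from $k^\times$ to $\cO^\times$ via the uniquely $3$-divisible kernel $1 + J(\cO)$; hence no additional twisting data is introduced by passing from $k$ to $\cO$.
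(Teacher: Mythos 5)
Your proposal is correct and takes essentially the same route as the paper: the paper simply cites the main result of K\"ulshammer's ``Crossed products and blocks with normal defect groups'' together with the observation from Lemma~\ref{inertial_quotient_lem} that the inertial quotient is cyclic (hence has trivial Schur multiplier), which is exactly the mechanism you spell out. Your elaboration of the Fong--Reynolds step, the nilpotency of $b_D$, the vanishing of the twist for $C_3$, and the identification $D \rtimes C_3 \cong C_{2^n}\times A_4$ fills in the details that the paper leaves to the reference; the $\cO$-versus-$k$ concern you raise is already handled by K\"ulshammer's result being stated over a complete discrete valuation ring.
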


\begin{proof}
By Lemma \ref{inertial_quotient_lem} the possible inertial quotients are $1$ and $C_3$. Since in either case the inertial quotient is cyclic the result follows from the main result of~\cite{ku85}.
\end{proof}


The following appears as~\cite[Theorem 15]{sa17}, but we include it here for completeness.

\begin{prop}
\label{per_isom}
Let $G$ be a finite group and $B$ a block of $\cO G$ with defect group $D \cong C_{2^n} \times C_2 \times C_2$ for $n>1$. Then either $B$ is nilpotent or $l(B)=3$. Let $G'=C_{2^n} \times A_4$ and $B'=\cO G'$. If $B$ is not nilpotent then there is a perfect isometry between $B$ and $B'$.

\end{prop}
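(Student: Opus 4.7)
The plan is to split on the two possibilities for the inertial quotient of $B$ furnished by Lemma~\ref{inertial_quotient_lem}. If the inertial quotient is trivial then $B$ is nilpotent by definition, and the statement is vacuous in this case. So the substance of the proof reduces to the situation where the inertial quotient is $C_3$; here one must both establish $l(B)=3$ and produce a perfect isometry between $B$ and $B'=\cO(C_{2^n}\times A_4)$.

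The strategy in the non-nilpotent case is to factor the desired perfect isometry through the Brauer correspondent. Let $b$ be the block of $N_G(D)$ with $b^G=B$. Then $b$ has $D$ as a normal defect group and inertial quotient $C_3$, so by Lemma~\ref{normaldefectlemma}, which rests on~\cite{ku85}, $b$ is Morita equivalent to $B'$. Since a Morita equivalence between blocks defined over $\cO$ induces a perfect isometry on character groups, this provides a perfect isometry $I_1:\mathbb{Z}\Irr(b)\to\mathbb{Z}\Irr(B')$.

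The remaining input, and the main obstacle, is a perfect isometry $I_0:\mathbb{Z}\Irr(B)\to\mathbb{Z}\Irr(b)$ between $B$ and its Brauer correspondent. This is the content of Watanabe's theorem~\cite{wa05}, which supplies such an isometry for $2$-blocks with abelian defect groups and inertial quotient of order $3$. Granted this, the composition $I_1\circ I_0$ is the desired perfect isometry between $B$ and $B'$. The equality $l(B)=l(B')=l(\cO A_4)=3$ is then automatic, since a perfect isometry preserves the rank of $\mathbb{Z}\prj$ (equivalently the number of irreducible Brauer characters).

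The principal subtlety to watch is that everything is set up over $\cO$ rather than merely over $k$: Watanabe's isometry must be verified to be $\cO$-rational in the sense of sending $\CF(\cdot,\cdot,\cO)$ and $\CF_{p'}(\cdot,\cdot,\cO)$ to their analogues, and the Morita equivalence underlying Lemma~\ref{normaldefectlemma} must likewise lift from $k$ to $\cO$. Both are standard in the present normal-defect, cyclic-inertial-quotient setting, so no new essential difficulty arises beyond citing~\cite{wa05}.
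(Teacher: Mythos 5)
Your high-level skeleton is right: factor the desired isometry through the Brauer correspondent $C$ of $B$ in $N_G(D)$, use~\cite{wa05} to get a perfect isometry $B\leftrightarrow C$, and use K\"ulshammer's theorem via Lemma~\ref{normaldefectlemma} to identify $C$ with $B'$ up to Morita equivalence. But you have inverted the logical order in a way that creates a genuine gap.

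You claim $l(B)=3$ is an automatic consequence of the perfect isometry, i.e.\ that Watanabe's theorem can be invoked first. The paper does the opposite: it says explicitly ``we must first show that $l(B)=3$'' and devotes the bulk of the proof to exactly that, by an induction on $n$ adapted from~\cite{ks13}. That argument is substantial: one uses Kessar--Malle~\cite{km13} to see that every irreducible character of $B$ has height zero, hence $k(B)\le|D|$ by Robinson~\cite{ro92}; one enumerates a complete set $\mathcal{X}$ of $G$-classes of $B$-subsections and, using the inductive hypothesis on the blocks $b_{x^i}$ and nilpotency of the blocks $b_{x^jy_1}$, obtains $k(B)=l(B)+2^{n+2}-3\le 2^{n+2}$, so $l(B)\le 3$; finally one uses the contribution matrix of the subsection with $l(b_u)=1$ (diagonal entries odd squares, trace $|D|$, from~\cite[1.37]{sambale}) to rule out $k(B)=2^{n+2}-1$ and $k(B)=2^{n+2}-2$, forcing $l(B)=3$ and $k(B)=2^{n+2}$. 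Only then is~\cite{wa05} invoked. Treating $l(B)=3$ as input rather than output matters: Watanabe's result is not an unconditional black box in this setting (note in particular that~\cite{wa05} is phrased for blocks with cyclic hyperfocal subgroup, whereas here the hyperfocal subgroup $[D,E]\cong C_2\times C_2$ is not cyclic), and the numerical information $l(B)=3$, $k(B)=2^{n+2}$ is precisely what makes the character-theoretic matching go through. By deferring $l(B)=3$ to a corollary of the isometry you have, in effect, assumed what the bulk of the proof has to establish.

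So the gap is concrete: the counting/contribution-matrix argument establishing $l(B)=3$ (and $k(B)=2^{n+2}$) is not optional bookkeeping but the main content of the proposition, and it cannot be recovered after the fact from a perfect isometry whose construction depends on it. Your sketch of the $\cO$-rationality issues at the end is fine as far as it goes, but it is secondary to this missing step.
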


\begin{proof}
Write $E$ for the inertial quotient of $B$. If $E=1$, then $B$ is nilpotent and the result holds. Hence we may assume $|E|=3$. We must first show that $l(B)=3$, and we do this by adapting a method used in~\cite{ks13}. We proceed by induction on $n$. Assume that $l(B')=3$ whenever $B'$ is a block with defect group $C_{2^m} \times C_2 \times C_2$ for $m \geq 1$ and inertial index $3$, and observe that this is known to hold for $m=1$ by~\cite{kkl12}.

By~\cite{km13} every irreducible character of $B$ has height zero, and so by~\cite[1.2(ii)]{ro92} we have $k(B) \leq |D|$. Let $(D,b_D)$ be a maximal $B$-subpair. Since $D$ is abelian, $N_G(D,b_D)$ controls fusion of $B$-subpairs in $(D,b_D)$. If $u \in D$, then let $b_u$ be the unique block of $C_G(u)$ such that $(u,b_u) \in (D,b_D)$. Write $D=\langle x,y_1,y_2 \rangle$, where $y$ has order $2^n$ and $y_1,y_2$ are involutions. Then $\mathcal{X}:=\{ (1,B),(x^i,b_{x^i}),(x^jy_1,b_{x^jy_1}):1 \leq i \leq 2^n-1, 0 \leq j \leq 2^n-1 \}$ form a complete set of $G$-conjugacy class representatives of subsections in $B$. By a well known reformulation of~\cite[5.12]{na98} (see exercise 5.7 of~\cite{na98}) we then have $$2^{n+2} \geq k(B) =\sum_{(u,b_u) \in \mathcal{X}} l(b_u).$$ Now since $D$ is abelian, each block $b$ in the above summation may be chosen to have defect group $D$. First let $1 \neq u=x^i$ for some $i$. Then $b_u$ has inertial index $3$. Now $C_G(u)$ has a non-trivial central $2$-subgroup $Z_u \leq \langle x \rangle$. The unique block $\bar{b}_u$ of $C_G(u)/Z_u$ corresponding to $b_u$ has inertial index $3$ and by induction $3=l(\bar{b}_u) = l(b_u)$. Now let $u=x_jy_1$ for some $j$. Then $b_u$ has inertial index $1$ and so is nilpotent, hence $l(b_u)=1$. Substituting, we have $l(B) + 2^{n+2}-3 = k(B) \leq 2^{n+2}$, so $l(B) \leq 3$.

We have a subsection $(u,b_u)$ with defect group $D$ and $l(b_u)=1$. By~\cite[1.37]{sambale}, the diagonal entries of the contribution matrix of $B$ (with rows labelled by $\Irr(B)$) are odd squares, and the trace of the contribution matrix is $|D|$. Hence $2^{n+2}$ is a sum of $k(B)$ odd squares, which cannot happen if $k(B)=2^{n+2}-1$ or $k(B)=2^{n+2}-2$. Hence $l(B)=3$.

Let $C$ be the unique block of $N_G(D)$ with $C^G=B$. By~\cite{wa05} there is a perfect isometry between $B$ and $C'$. By Lemma \ref{normaldefectlemma} $C$ and $B'$ are Morita equivalent, so there is a perfect isometry between $B'$ and $C$, and we are done.
\end{proof}

\begin{rem}
In the above, the perfect isometry constructed in~\cite{wa05} is additionally compatible with the $*$ construction in~\cite{bp80} and so could be shown to satisfy the hypotheses of Proposition~\ref{prop:index_p}(iii). However this can also be shown using the machinery of perfect self-isometry groups developed earlier in this section, and this is what we do in the first part of the proof of the following Theorem.
\end{rem}

\begin{thm}
\label{index2theorem}
Let $G$, $N$, $B$, $b$ and $D$ be as in Proposition~\ref{prop:grunit}. Suppose further that $D\cong C_{2^n}\times C_2\times C_2$, for some $n>1$, $D\cap N\cong C_{2^{n-1}}\times C_2\times C_2$ and $b$ is Morita equivalent to the principal block of $\mathcal{O}(C_{2^{n-1}}\times A_4)$ (respectively $\mathcal{O}(C_{2^{n-1}}\times A_5)$). Then $B$ is Morita equivalent to the principal block of $\mathcal{O}(C_{2^n}\times A_4)$ (respectively $\mathcal{O}(C_{2^n}\times A_5)$).
\end{thm}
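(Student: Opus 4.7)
The plan is to apply Proposition~\ref{per_isom} to obtain a perfect isometry between $B$ and the target block $B'$, to modify it via Theorem~\ref{thm:C2nA4} so that it becomes compatible with the Morita equivalence hypothesis at the level of $N$ and $N'$, and then to use the central $G/N$-graded unit from Proposition~\ref{prop:grunit} to upgrade the Morita equivalence between $b$ and $b'$ to one between $B$ and $B'$. I describe the argument for the $A_4$ case; the $A_5$ case is analogous, using the corresponding classification of perfect self-isometries of the principal block of $\mathcal{O}(C_{2^n}\times A_5)$.

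Set $G' = C_{2^n}\times A_4$, $N' = C_{2^{n-1}}\times A_4$, $B' = \mathcal{O}G'$ and $b' = \mathcal{O}N'$. Then $(G', N', B', b')$ satisfies the hypotheses of Proposition~\ref{prop:grunit}. The assumed Morita equivalence $b\sim b'$ induces a perfect isometry $J\colon \mathbb{Z}\Irr(b)\to\mathbb{Z}\Irr(b')$ and a centre isomorphism $\phi_J\colon Z(b)\to Z(b')$. Proposition~\ref{per_isom} supplies a perfect isometry $I\colon \mathbb{Z}\Irr(B)\to\mathbb{Z}\Irr(B')$.

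The crucial step is to modify $I$, by composing with a perfect self-isometry of $B'$ from Theorem~\ref{thm:C2nA4}, so that it satisfies the pairing condition of Proposition~\ref{prop:index_p}(iii) with respect to $J$: for each $\chi \in \Irr(b)$, the pair $\Irr(B,\chi) = \{\chi_1, \chi_2\}$ should be sent to $\{\epsilon_\chi \psi_1, \epsilon_\chi \psi_2\}$, where $J(\chi) = \epsilon_\chi \psi$ and $\Irr(B',\psi) = \{\psi_1, \psi_2\}$. Using the explicit description in Theorem~\ref{thm:C2nA4} of the perfect self-isometries of $B'$ (which is generated by the individual self-isometries of $\mathcal{O}C_{2^n}$ and $\mathcal{O}A_4$), I would analyse the action of $I$ on constituents and verify case by case that the available self-isometries suffice to realise both the matching of pairs and the matching of signs, reducing to $I_{N,N'}=\pm J$. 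I expect this bookkeeping --- showing that the perfect self-isometry group of $B'$ acts with enough transitivity on the pairing data to achieve this adjustment --- to be the main technical obstacle.

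With $I$ so adjusted, Proposition~\ref{prop:index_p}(iii) and Lemma~\ref{lem:isomcent}(ii) together give $\phi_I|_{Z(b)} = \phi_J$, and the pairing condition forces $\phi_I$ to preserve the decomposition $Z(B) = Z(b)\oplus aZ(b)$, yielding $\phi_I(a) = a'u$ for some unit $u \in Z(b')^\times$. Let $P$ be a progenerator realising $b\sim b'$, so $\operatorname{End}_b(P)\cong (b')^{op}$, and form the left $B$-module $\tilde P = B\otimes_b P$, which decomposes as $P\oplus aP$ as a left $b$-module. The right $b'$-action on $P$ extends naturally to $\tilde P$; I then define the right action of $a'$ on $\tilde P$ to be $u^{-1}$ times the $B$-endomorphism induced by left multiplication by $a$ (which commutes with the $b$-action as $a$ is central). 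A direct calculation, using $\phi_I(a)^2 = \phi_J(a^2) = (a')^2 u^2$ and $u \in Z(b')$, confirms that $(a')^2$ then acts correctly and that this produces a well-defined $(B,B')$-bimodule structure. A computation with the decomposition $\tilde P \cong P \oplus aP$ identifies $\operatorname{End}_B(\tilde P)$ with $(b')^{op}[T]/(T^2 - \phi_J(a^2))$, which maps isomorphically onto $(B')^{op}$ via $T \mapsto u a'$; since $\tilde P$ is visibly a progenerator, this realises the required Morita equivalence $B \sim B'$.
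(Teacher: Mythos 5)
The overall architecture you propose --- a perfect isometry $I$ from Proposition~\ref{per_isom}, modification so that $I_{N,N'}$ matches the Morita-induced isometry, then use of the graded central unit to build a bimodule for $B$ and $B'$ --- is the same as the paper's, and your endomorphism-ring calculation at the end (in place of the paper's appeal to Marcus's theorem) looks like a viable, if unverified, variant. But the step you describe as ``the main technical obstacle'' and leave as prospective bookkeeping is precisely where the proof lives, and the route you sketch for it is not the right one.

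You propose to establish, by case analysis on constituents, that composing $I$ with a perfect self-isometry of $B'$ from Theorem~\ref{thm:C2nA4} can force both (a) that $I$ sends $\Irr(B,\chi)$ into $\pm\Irr(B',\psi)$ for each $\chi\in\Irr(b)$, and (b) that the resulting $I_{N,N'}$ agrees with the Morita isometry up to sign. But item (a) is not something one achieves by post-composition; it is a property that \emph{already holds} for every perfect isometry $I:\mathbb{Z}\Irr(B)\to\mathbb{Z}\Irr(B')$, and proving this is the actual content of the step. The paper's argument is structural, not combinatorial: consider the perfect self-isometry $J$ of $B$ given by $\chi\mapsto\operatorname{sgn}_N^G\cdot\chi$. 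This $J$ has order $2$, induces the identity on $Z(kB)$, and fixes $\mathbb{Z}\prj(B)$ pointwise (the last two by Proposition~\ref{prop:index_p}(i),(ii)). Hence $I\circ J\circ I^{-1}$ is a perfect self-isometry of $B'$ with the same three properties, and Theorem~\ref{thm:C2nA4} then forces it to be exactly $(I_{1,2^{n-1},1},I_{1,1})$, i.e.\ $I(\operatorname{sgn}_N^G\cdot\chi)=\operatorname{sgn}_{N'}^{G'}\cdot I(\chi)$ for all $\chi$. This is what makes $I_{N,N'}$ well-defined; without it, Proposition~\ref{prop:index_p}(iii) cannot even be invoked, and there is no reason your ``case by case'' search through self-isometries of $B'$ would terminate favourably. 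Once (a) is known, (b) is a separate adjustment; note also that the paper needs to modify \emph{both} $I$ (by a self-isometry of $B'$) \emph{and} the Morita equivalence $b\sim b'$ (by an algebra automorphism of $b'$ of the form $x\mapsto\zeta^{-t}x^{1/s}$) to realise $I_{N,N'}=I_{\operatorname{Mor}}$, whereas you only adjust $I$; your allowance of a nontrivial unit $u\in Z(b')^{\times}$ in $\phi_I(a)=a'u$ may compensate for this, but that would need to be checked rather than asserted. In short: the proof hinges on the identification of $I\circ J\circ I^{-1}$ via Theorem~\ref{thm:C2nA4}, and your proposal misses this idea entirely.
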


\begin{proof}
First suppose that $b$ is Morita equivalent to $\mathcal{O}(C_{2^{n-1}}\times A_4)$.

By Proposition~\ref{prop:index_p}(ii) $l(B) = 3$ and so by Proposition \ref{per_isom} there exists a perfect isometry
\begin{align*}
I:\mathbb{Z}\Irr(B)\to\mathbb{Z}\Irr(\mathcal{O}(C_{2^n}\times A_4)).
\end{align*}
Now $I$ induces an isomorphism of the groups of perfect self-isometries of $B$ and of $\mathcal{O}(C_{2^n}\times A_4)$ via $\alpha \mapsto I \circ \alpha \circ I^{-1}$ for $\alpha$ any perfect self-isometry of $\mathcal{O}(C_{2^n}\times A_4)$, and we denote this isomorphism by $I_{\operatorname{PI}}$. Consider the perfect self-isometry
\begin{align*}
J:\mathbb{Z}\Irr(B)&\to\mathbb{Z}\Irr(B)\\
\chi&\mapsto\operatorname{sgn}_N^G.\chi,
\end{align*}
where $\operatorname{sgn}_N^G$ is the linear character of $G$ with kernel $N$, so for each irreducible character $\theta$ of $b$, $J$ swaps the two extensions of $\theta$ to $G$. We know that $J$ is indeed a perfect isometry as it is induced by the $\mathcal{O}$-algebra automorphism of $\mathcal{O}G$ given by $g\mapsto\operatorname{sgn}_N^G(g)g$ for all $g\in G$.
\newline
\newline
Note that $J$ is a perfect self-isometry of order $2$ and that it induces the trivial $k$-algebra automorphism on $Z(kB)$. Furthermore by Proposition~\ref{prop:index_p}(i) and (ii) every character in $\prj (B)$ is fixed under multiplication by $\operatorname{sgn}_N^G$ and so $J$ is the identity on $\mathbb{Z}\prj (B)$. Therefore $I_{\operatorname{PI}}(J)$ must be of order $2$, induce the identity $k$-algebra automorphism on $Z(k(C_{2^n}\times A_4))$ and be the identity on $\mathbb{Z}\prj (\mathcal{O}(C_{2^n}\times A_4))$.
\newline
\newline
Adopting the notation of Theorem \ref{thm:C2nA4}, set $I_{\operatorname{PI}}(J)=(I_{j,l,1},I_{\sigma,\epsilon})$, where $\sigma\in S_4$, $\epsilon\in\{\pm1\}$ and $0\leq j,l<2^n$ with $j$ odd. Then the fact that $I_{\operatorname{PI}}(J)$ is the identity on $\mathbb{Z}\prj (\mathcal{O}(C_{2^n}\times A_4))$ forces $\sigma$ to be the identity permutation and $\epsilon=1$, the fact that $I_{\operatorname{PI}}(J)$ induces the identity $k$-algebra automorphism on $Z(k(C_{2^n}\times A_4))$ forces $j=1$ and the fact that $I_{\operatorname{PI}}(J)$ has order $2$ forces $l=2^{n-1}$. In other words $I_{\operatorname{PI}}(J)$ is induced by the $\mathcal{O}$-algebra automorphism
\begin{align*}
\mathcal{O}(C_{2^n}\times A_4)&\to\mathcal{O}(C_{2^n}\times A_4)\\
x\otimes y&\mapsto -x\otimes y,
\end{align*}
for all $y\in\mathcal{O}A_4$, where $x$ is a fixed generator of $C_{2^n}$. We have shown that
\begin{align*}
I(\operatorname{sgn}_N^G.\chi)=\operatorname{sgn}_{N'}^{G'}.I(\chi),
\end{align*}
for all $\chi\in\Irr(B)$, where $G':=C_{2^n}\times A_4$, $N':=C_{2^{n-1}}\times A_4$. Therefore $I$ satisfies the hypotheses of Proposition~\ref{prop:index_p}(iii), where $B':=\mathcal{O}(C_{2^n}\times A_4)$ and $b':=\mathcal{O}(C_{2^{n-1}}\times A_4)$. Let $I_{N,N'}$ be the perfect isometry between $b$ and $b'$ induced by $I$ as in Proposition \ref{prop:index_p} and $I_{\operatorname{Mor}}$ the perfect isometry induced by the Morita equivalence between $b$ and $b'$. Write $I_{N,N'}\circ I_{\operatorname{Mor}}^{-1}=(I_{s,t,1},I_{\tau,\delta})$ in the notation of Theorem \ref{thm:C2nA4} applied to $\mathcal{O}(C_{2^{n-1}}\times A_4)$, where $\tau\in S_4$, $\delta\in\{\pm1\}$ and $0\leq s,t<2^{n-1}$ with $s$ odd. By composing $I$ with the perfect self-isometry $(I_{1,1,1},I_{\tau,\delta})^{-1}$ of $B'$ and composing the Morita equivalence $b\sim_{\operatorname{Mor}}b'$ with that induced by the $\mathcal{O}$-algebra automorphism of $b'$ defined by $x\mapsto\zeta^{-t}x^{\frac{1}{s}}$, we may assume that $I_{N,N'}=I_{\operatorname{Mor}}$.
\newline
\newline
Let $\phi_I:Z(B)\to Z(B')$ be the isomorphism of centres from Lemma~\ref{lem:isomcent} and let $M$ be the $b'$-$b$-bimodule inducing the Morita equivalence $b\sim_{\operatorname{Mor}}b'$. Since $I_{N,N'}=I_{\operatorname{Mor}}$ and by Lemma~\ref{lem:isomcent}(ii) we have that $\phi_I|_{Z(b)}=\phi_{I_{N,N'}}:Z(b)\to Z(b')$ is the isomorphism of centres induced by the Morita equivalence. In other words
\begin{align}\label{centre:mor}
\phi_I(\alpha)m=m\alpha,\text{ for all }\alpha\in b,m\in M.
\end{align}
Let $a\in B$ be a graded unit as described in Proposition~\ref{prop:grunit} and set $a':=\phi_I(a)$. Since $\phi_I$ respects the $G/N$ and $G'/N'$-gradings, $a'$ is also a graded unit. We now give $M$ the structure of a module for
\begin{align*}
(b'\otimes_{\mathcal{O}}b^{\operatorname{op}})\oplus(a'^{-1}b'\otimes_{\mathcal{O}}(ab)^{\operatorname{op}})
\end{align*}
by defining $a'^{-1}.m.a=m$, for all $m\in M$, where \ref{centre:mor} ensures that this does indeed define a module. Now by~\cite[Theorem 3.4]{mar96} we have proved that $B$ is Morita equivalent to $\mathcal{O}(C_{2^n}\times A_4)$.
\newline
\newline
For the $A_5$ case we note that the principal blocks of $\mathcal{O}A_4$ and $\mathcal{O}A_5$ are perfectly isometric by~\cite[A1.3]{br90}. The proof now proceeds exactly as above by replacing the principal block of $\mathcal{O}A_4$ everywhere with that of $\mathcal{O}A_5$ (note that we can replace the principal block of $\mathcal{O}A_4$ with that of $\mathcal{O}A_5$ in Theorem \ref{thm:C2nA4}).
\end{proof}


\section{Proof of the main theorem and corollaries}


\begin{prop}
\label{oddindex}
Let $B$ be a block of $\cO G$ for a finite group $G$ with defect group $D \cong C_{2^n} \times C_2 \times C_2$ for some $n>1$. Let $N \lhd G$ be of odd prime index $w$ and let $b$ be a $G$-stable block of $\cO N$ covered by $B$, so that $D$ is also a defect group for $b$. If $b$ is not nilpotent, then either $B$ is nilpotent or $B \sim_{\operatorname{Mor}} b$.
\end{prop}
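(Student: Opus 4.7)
The plan is first to reduce via inertial quotients, then to exploit that $[G:N]$ is coprime to $p=2$.

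Since $w=[G:N]$ is odd, $D\leq N$ and $D$ is a defect group of $b$. By Lemma~\ref{inertial_quotient_lem} the inertial quotient of either block lies in $\{1,C_3\}$. As $b$ is non-nilpotent, $E_b=C_3$. If $E_B=1$ then $B$ is nilpotent and we are done, so I may assume $E_B=C_3$.

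Since $b$ is $G$-stable, $e_b$ is central in $\cO G$ and $e_b\cO G$ is a crossed product of $b$ with structure group $G/N\cong C_w$. The aim is to show this crossed product is isomorphic to $b\otimes_\cO\cO(G/N)$, so that $B$, as a block summand, is Morita equivalent to $b$. Two ingredients are needed. First, the conjugation action of $G/N$ on $b$ is inner: its image in $\Out(b)$ has odd order and, modulo the inertial quotient, lands in the $2'$-part of $\Aut(D)/E_b$, which is trivial because $\Aut(D)\cong\Aut(C_{2^n})\times S_3$ has $2'$-part $C_3$, already absorbed by $E_b$. Second, the resulting $2$-cocycle in $H^2(C_w,Z(b)^\times)$ vanishes because $w$ is coprime to $p$ and $\cO$ is sufficiently large to split $\cO[C_w]$, so the pro-$p$ part of $\cO^\times$ is uniquely $w$-divisible and the $w$-torsion $C_w$-cohomology of the roots of unity is trivial.

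The principal obstacle is making the inner-action step rigorous, especially when $w=3$, where one must argue that the $C_3$ arising from $G/N$ does not combine with $E_b$ to yield a genuine outer action on $b$; this requires careful bookkeeping with the subpair normalisers $N_G(D,b_D)\supseteq N_N(D,b_D^N)$ and the equality $E_B=E_b$. A cleaner alternative route would be to mimic the proof of Theorem~\ref{index2theorem}: combine the perfect isometry $B\sim b$ arising from Proposition~\ref{per_isom} (since each of $B$ and $b$ is perfectly isometric to $\cO(C_{2^n}\times A_4)$) with a suitable $G/N$-graded unit in $Z(B)$, and invoke Marcus' theorem~\cite{mar96}; in the coprime-index setting the cohomological obstructions to producing such a unit automatically vanish, so this approach should go through with fewer technical complications than the index-$2$ case already handled in the paper.
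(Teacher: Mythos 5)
Your proposal has a genuine gap at exactly the step you flag as the principal obstacle: proving that the conjugation action of $G/N$ on $b$ is inner. The justification offered --- that the image of $G/N$ in $\Out(b)$ lands in the $2'$-part of $\Aut(D)/E_b$, which is trivial --- does not hold up: there is no natural map from the relevant subgroup of $\Out(b)$ to $\Aut(D)/E_b$, and in general $\Out(b)$ is not controlled by the automorphisms of the defect group. The inner-ness is the crux, not a formality, because the outer case genuinely can occur under the stated hypotheses, and when it does the correct conclusion is that $B$ is nilpotent. Assuming $E_B=C_3$ at the outset does not rule it out by any structural statement about $\Out(b)$.

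The paper's argument avoids $\Out(b)$ entirely. It introduces Dade's normal subgroup $G[b]$ of elements of $G$ acting as inner automorphisms on $b\otimes_\cO k$, and uses \cite[2.2]{kkl12} together with \cite[7.8]{pu88} to get a source-algebra equivalence between $b$ and a block of $G[b]$. Since $[G:N]$ is prime this gives a dichotomy: either $G[b]=G$, and then $B\sim_{\operatorname{Mor}}b$ directly by the source-algebra equivalence, or $G[b]=N$. In the latter case the key numerical input $l(b)=3$ and $l(B)\in\{1,3\}$ from Proposition~\ref{per_isom} is combined with Clifford theory (and \cite[3.5]{da73}, ensuring $B$ is the unique block above $b$ and that $G$-fixed irreducible Brauer characters of $b$ extend in $w$ ways inside $B$) to count $\IBr(B)$. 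If $w>3$, or if $w=3$ and every Brauer character of $b$ is $G$-fixed, then $l(B)=3w>3$, a contradiction; so $w=3$ and the three Brauer characters form a single $G/N$-orbit, giving $l(B)=1$ and $B$ nilpotent. The count of $\IBr(B)$ does the work you hoped to do by inspecting $\Out(b)$, and is precisely what separates the nilpotent from the non-nilpotent outcome.

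Your alternative route (perfect isometry plus a $G/N$-graded central unit plus Marcus' theorem \cite{mar96}) meets the same obstacle in a different guise: the existence of a unit in every graded component of $Z(B)$ is closely tied to $G[b]=G$ and is not automatic for coprime index, and Proposition~\ref{prop:grunit} is stated and proved only for index $p$, so a separate coprime-index counterpart would need to be established. Some argument in the style of the paper's $l(B)$-count appears unavoidable.
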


\begin{proof}
By Proposition \ref{per_isom} we have $l(b)=3$ and either $B$ is nilpotent (with $l(B)=1$) or $l(B)=3$. The normal subgroup $G[b]$ of $G$ is defined to be the group of elements of $G$ acting as inner automorphisms on $b \otimes_{\cO} k$. Let $B'$ be a block of $\cO G[b]$ covered by $B$. Then $b$ is source algebra equivalent to $B'$, and in particular has isomorphic inertial quotient by~\cite[2.2]{kkl12}, noting that a source algebra equivalence over $k$ implies one over $\cO$ by~\cite[7.8]{pu88}. Hence we may assume that $G[b]=N$. Then $B$ is the unique block of $G$ covering $B'$ by~\cite[3.5]{da73}.

Now consider the action of $G$ on the $\IBr(b)$. If $w>3$, then every $\varphi \in \IBr(b)$ is fixed and extends to $w$ distinct elements of $\IBr(G)$. Since $B$ is the unique block of $G$ covering $b$ these all lie in $B$. Hence $l(B)=3w>3$, a contradiction. Hence $w=3$. Then either every element of $\IBr(b)$ is fixed, in which case $l(B)=3w>3$, a contradiction, or they are permuted in a single orbit, in which case $l(B)=1$ and $B$ is nilpotent.
\end{proof}

In the following write $\rk_p(Q)$ for the rank of a $p$-group $Q$, that is, $p^{\rk_p(Q)}$ is the size of the largest elementary abelian subgroup of $Q$.

\begin{thm}
\label{thm:main}
Let $G$ be a finite group and $B$ a block of $\mathcal{O}G$ with defect group $D\cong C_{2^n}\times C_2\times C_2$ for $n>1$. Then $B$ is Morita equivalent to the principal block of $\mathcal{O}(C_{2^n}\times C_2\times C_2)$, $\mathcal{O}(C_{2^n}\times A_4)$ or $\mathcal{O}(C_{2^n}\times A_5)$.
\end{thm}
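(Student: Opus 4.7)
The plan is to proceed by induction on $n$. The base case $n=1$, where $D$ is elementary abelian of order $8$, is supplied by~\cite{ekks14} (with the $\cO$-theoretic refinement from~\cite{ea17}). For $n>1$, assume the result for defect groups $C_{2^{n-1}}\times C_2 \times C_2$. If $B$ is nilpotent, Puig's theorem on source algebras of nilpotent blocks gives $B\sim_{\operatorname{Mor}}\cO D$. Otherwise Lemma~\ref{inertial_quotient_lem} forces the inertial quotient to be $C_3$, and Proposition~\ref{per_isom} yields $l(B)=3$ together with a perfect isometry between $B$ and $\cO(C_{2^n}\times A_4)$.

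Take $(G,B)$ a counterexample with $|G|$ minimal. Several standard block-theoretic reductions then constrain $G$. First, quotienting by $O_{2'}(Z(G))$ preserves Morita equivalence, so we may assume $O_{2'}(Z(G))=1$. By Fong--Reynolds we may assume every block of a normal subgroup covered by $B$ is $G$-stable. By Proposition~\ref{oddindex} combined with minimality, $G$ has no proper normal subgroup of odd prime index. Now suppose $N\lhd G$ with $[G:N]=2$ and $G=ND$. By Lemma~\ref{index_p_background_lem}, the defect group of the covered block $b$ is $D\cap N$; a short structural argument, using that the inertial quotient $C_3$ acts nontrivially only on the elementary abelian factor $\langle y_1,y_2\rangle$ of $D$, shows that the relevant case is $D\cap N\cong C_{2^{n-1}}\times C_2\times C_2$. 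By the inductive hypothesis, $b$ is Morita equivalent to $\cO(D\cap N)$, $\cO(C_{2^{n-1}}\times A_4)$, or $\cO(C_{2^{n-1}}\times A_5)$; since $B$ non-nilpotent forces $b$ non-nilpotent, Theorem~\ref{index2theorem} lifts the last two possibilities to the desired Morita equivalence of $B$, contradicting minimality. The case $D\le N$ with $[G:N]=2$ is handled by a parallel argument using extension of blocks through index-$2$ subgroups.

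These reductions force $G$ to have no proper normal subgroup of prime index, so $G=[G,G]$; together with $O_{2'}(Z(G))=1$ and the block-theoretic $G$-stability, standard generalized Fitting subgroup arguments give $F^*(G)=Z(G)*E$ for a single quasisimple component $E$. The proof concludes via the CFSG-based analysis of~\cite{ekks14}: one enumerates the quasisimple groups $E$ that can admit a $2$-block with a defect group of the required form (the cyclic factor $C_{2^n}$ with $n>1$ already eliminates most candidates), and checks in each surviving case that the block is Morita equivalent to one of $\cO D$, $\cO(C_{2^n}\times A_4)$, or $\cO(C_{2^n}\times A_5)$.

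The main obstacle is this final quasisimple case analysis: although~\cite{ekks14} directly treats only elementary abelian defect groups, the machinery of Section~2 (notably Theorem~\ref{index2theorem}, which rests on the classification of perfect self-isometries of $\cO(C_{2^n}\times A_4)$ in Theorem~\ref{thm:C2nA4}) is designed precisely to peel a factor of $2$ off the cyclic part via repeated index-$2$ reductions, bringing us back to the elementary abelian setting handled by~\cite{ekks14} except for a small residual list of quasisimple groups that must be checked directly using CFSG.
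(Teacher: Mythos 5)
Your overall plan (minimal counterexample, Fong--Reynolds to force quasiprimitivity, kill odd-index normal subgroups via Proposition~\ref{oddindex}, kill index-$2$ normal subgroups via Theorem~\ref{index2theorem}, pass through $F^*(G)$ to a single quasisimple component, then invoke CFSG) matches the paper's structure, and the index-$2$ step is where the novel Section~2 machinery is spent exactly as you say. However, there is a genuine gap at the quasisimple endgame, and a secondary issue with the minimality parameter.

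The gap: you assert that~\cite{ekks14} ``directly treats only elementary abelian defect groups'' and propose that Theorem~\ref{index2theorem} will ``peel a factor of $2$ off the cyclic part via repeated index-$2$ reductions, bringing us back to the elementary abelian setting.'' This cannot work once $G$ is quasisimple: a quasisimple group is perfect, so it has no normal subgroup of index $2$ at all, and Theorem~\ref{index2theorem} (which requires $N \lhd G$ with $[G:N]=2$ and $G=ND$) simply does not apply. The induction on $n$ therefore cannot be pushed through the quasisimple case. What the paper actually does is cite~\cite[6.1]{ekks14}, a theorem about blocks of \emph{quasisimple} groups with arbitrary abelian defect group (not just elementary abelian), which produces the dichotomy: either $B$ is Morita equivalent to a block of a direct product $H_0\times H_1$ with $H_0$ abelian of Sylow $2$-subgroup $C_{2^n}$ and the $H_1$-block having Klein four defect (so~\cite{li94} finishes), or $B$ is covered by a nilpotent block of an overgroup (so~\cite[4.3]{pu11} plus Lemma~\ref{normaldefectlemma} finishes). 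Without this input, or an explicit replacement CFSG analysis which you do not carry out, the proof does not close.

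The secondary issue: you minimise $|G|$, whereas the paper minimises $[G:O_{2'}(Z(G))]$. The finer parameter is needed when $B$ covers a nilpotent block of a normal subgroup: the K\"ulshammer--Puig reduction replaces $G$ by a central extension $\tilde L$ whose order need not be smaller than $|G|$, but for which $[\tilde L:O_{2'}(Z(\tilde L))]$ is controlled. Your proposal does not address the nilpotent-covered-block case at all, and with $|G|$ as the parameter the reduction would not obviously terminate. Finally, the case ``$D\le N$ with $[G:N]=2$'' that you flag does not in fact arise: by Lemma~\ref{index_p_background_lem} the stabiliser of the covered block $b$ is $DN$, so quasiprimitivity forces $G=DN$ and hence $D\not\le N$.
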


\begin{proof}
Let $B$ be a block of $\cO G$ for a finite group $G$ with $[G:O_{2'}(Z(G))$ minimised subject to the condition that $B$ has defect group $D \cong C_{2^n}\times C_2\times C_2$ for some $n>1$ and $B$ is not Morita equivalent to the principal block of $\cO D$, $\cO(C_{2^n}\times C_2\times C_2)$, $\mathcal{O}(C_{2^n}\times A_4)$ or $\mathcal{O}(C_{2^n}\times A_5)$.

Suppose that $N \lhd G$ and $b$ is a block of $\cO N$ covered by $B$. Write $I=I_G(b)$ for the stabilizer of $b$ in $G$, and $B_I$ for the Fong-Reynolds correspondent. Now $B_I$ is Morita equivalent to $B$ and they have isomorphic defect groups. We have $O_{2'}(Z(G)) \leq O_{2'}(Z(I))$, and if $I \neq G$, then $[I:O_{2'}(Z(I))]<[G:O_{2'}(Z(G))]$. Hence by minimality $I=G$.

Now suppose that $b$ is nilpotent. Let $b'$ be a block of $\cO Z(G)N$ covered by $B$ and covering $b$. By the above argument applied to $Z(G)N$ and $b'$, $b'$ is $G$-stable. Note that $b'$ must also be nilpotent. Using the results of~\cite{kp90}, as outlined in~\cite[Proposition 2.2]{ekks14}, $B$ is Morita equivalent to a block $\tilde{B}$ of a central extension $\tilde{L}$ of a finite group $L$ by a $2'$-group such that there is an $M \lhd L$ with $M \cong D \cap (Z(G)N)$, $G/Z(G)N \cong L/M$, and $\tilde{B}$ has defect group isomorphic to $D$. Note that $[\tilde{L}:O_{2'}(Z(\tilde{L}))] \leq |L| = [G:Z(G)N]|D \cap (Z(G)N)| \leq [G:O_{2'}(Z(G))]$ and that equality only occurs when $N \leq Z(G)O_2(G)$. Hence by minimality $N \leq Z(G)O_2(G)$.

We conclude that $B$ is quasiprimitive, that is, every block of every normal subgroup covered by $B$ is $G$-stable, and that if $B$ covers a nilpotent block of a normal subgroup $N$ of $G$, then $N \leq Z(G)O_2(G)$.

We claim that $O^2(G)=G$. Suppose otherwise, and let $N \lhd G$ be a subgroup of index $2$. Let $b$ be the unique block of $N$ covered by $B$. Then by Lemma \ref{index_p_background_lem} $B$ is the unique block of $G$ covering $b$ since $G/N$ is a $2$-group, $G=ND$ and $b$ has defect group $D \cap N$. Let $b_D$ be a block of $C_G(D)$ with $(b_D)^G=B$. Since $B$ has inertial quotient $C_3$ and $N_G(D,b_D)$ controls fusion in $D$, the inertial quotient of $b$ is $C_3$ (if it were $1$, then $b$ would be nilpotent and so $G=Z(G)O_2(G)$, a contradiction by Lemma \ref{normaldefectlemma}). If $D \cap N \cong C_{2^n} \times C_2$, then $\Aut(D \cap N)$ is a $2$-group and so $b$ is nilpotent, a contradiction. If $D \cap N \cong (C_2)^3$, then by~\cite{ea16} $b$ is Morita equivalent to the principal block of $\cO(C_2 \times A_4)$ or $\cO(C_2 \times A_5)$ and so Theorem \ref{index2theorem} gives a contradiction. Otherwise, since $[N:O_{2'}(Z(N))]<[G:O_{2'}(Z(G))]$ by minimality we also have a contradiction by Theorem \ref{index2theorem}. Hence $O^2(G)=G$.

Before proceeding we recall the definition and some properties of the generalized Fitting subgroup $F^*(G)$ of a finite group $G$. Details may be found in~\cite{asc00}. A \emph{component} of $G$ is a subnormal quasisimple subgroup of $G$. The components of $G$ commute, and we define the \emph{layer} $E(G)$ of $G$ to be the normal subgroup of $G$ generated by the components. It is a central product of the components. The \emph{Fitting subgroup} $F(G)$ is the largest nilpotent normal subgroup of $G$, and this is the direct product of $O_r(G)$ for all primes $r$ dividing $|G|$. The \emph{generalized Fitting subgroup} $F^*(G)$ is $E(G)F(G)$. A crucial property of $F^*(G)$ is that $C_G(F^*(G)) \leq F^*(G)$, so in particular $G/F^*(G)$ may be viewed as a subgroup of $\Out(F^*(G))$.

Write $L_1,\ldots,L_t$ for the components of $G$, so $E(G)=L_1\cdots L_t \lhd G$. Note that $G$ permutes the $L_i$. There must be at least one component, since otherwise the block $b'$ of $F^*(G)$ covered by $B$ is nilpotent and so $F^*(G)=Z(G)O_2(G)$. Therefore $D \leq C_G(F^*(G)) \leq F^*(G)=Z(G)O_2(G)$, so that $D \lhd G$, a contradiction by Lemma \ref{normaldefectlemma}.

We claim that $O_2(G) \leq Z(G)$. Write $N=C_G(O_2(G))$ and $b$ for the unique block of $N$ covered by $B$. Note that $D \leq N \lhd G$. If $O_2(G) \cong C_{2^m}$ for $m \geq 1$ or $O_2(G) \cong C_{2^m} \times C_2$ for $m>1$, then $\Aut(O_2(G))$ and so $G/N$ is a $2$-group, which forces $N=G$ as $O^2(G)=G$. Suppose that $O_2(G) \cong C_{2^m} \times (C_2)^2$ for some $m \geq 1$. Let $b_D$ be a block of $C_G(D)$ with $(b_D)^{C_G(D)}=b$ (and so $(b_D)^G=B$). Since $N_G(D,b_D)$ controls fusion in $D$ there must be a $G$-stable subgroup of $O_2(G)$ of order $4$. Hence $G/N$ is isomorphic to a subgroup of $S_3$. Since $O^2(G)=G$, we have $[G:N]|3$. Then by Proposition \ref{oddindex} and minimality (noting that $Z(G)\leq N$) we again have $G=N$, so $O_2(G) \leq Z(G)$ as claimed.

We have shown that $F^*(G)=E(G)Z(G)$. We next show that $t=1$, that is, $E(G)$ is quasisimple. Write $b^*$ for the unique block of $F^*(G)$ covered by $B$. Then $D \cap F^*(G)$ is a defect group for $b^*$. Hence $(D \cap F^*(G))/O_p(Z(G))$ is a defect group for a block of $F^*(G)/O_p(Z(G))$. Therefore $(D \cap F^*(G))/O_p(Z(G))$ is a radical $2$-subgroup of $F^*(G)/O_p(Z(G))$ (recall that a $p$-subgroup $Q$ of a finite group $H$ is radical if $Q=O_p(N_H(Q))$ and that defect groups are radical $p$-subgroups) and so $(D \cap F^*(G))Z(G)/Z(G)$ is a radical $2$-subgroup of $F^*(G)/Z(G)\cong (L_1Z(G)/Z(G)) \times \cdots \times (L_tZ(G)/Z(G))$. By~\cite[Lemma 2.2]{ou95} it follows that $(D \cap F^*(G))Z(G)/Z(G) = D_1 \times \cdots \times D_m$, where $D_i = (D \cap F^*(G))Z(G)/Z(G)) \cap (L_iZ(G)/Z(G))$ (and $D_i$ is a radical $2$-subgroup but not necessarily a defect group). Write $b_i$ for the block of $L_i$ covered by $B$ and $\bar{b}_i$ for the unique block of $L_iO_2(G)/O_2(G)$ corresponding to $b_i$. If $\rk_p(D_i)=1$ for some $i$, then $\bar{b}_i$ has cyclic defect group and so is nilpotent, hence $b_i$ is also nilpotent by~\cite{wa94} (where the result is stated over $k$, but follows over $\cO$ immediately), a contradiction. Hence since $\sum_{i=1}^t \rk_p(D_i) \leq \rk_p(D) = 3$ we have $t=1$.

Now by the Schreier conjecture $G/F^*(G)$ is solvable. Suppose that $G \neq F^*(G)$. Since $O^2(G)=G$ there is $N \lhd G$ of odd prime index. Let $b$ be the unique block of $N$ covered by $G$. Note that we may assume $Z(G) \leq N$, as otherwise $B$ and $b$ are Morita equivalent by~\cite[2.2]{kkl12} and we may replace $G$ and $B$ by $N$ and $b$. Therefore we have $[N:O_{2'}(Z(N))] < [G:O_{2'}(Z(G))]$ and so Proposition \ref{oddindex} leads to a contradiction. Hence we may assume $G=F^*(G)$ and so $G=L_1Z(G)$. Further application of Proposition \ref{oddindex} and Theorem \ref{index2theorem} allows us to assume that $G=L_1$. Applying~\cite[6.1]{ekks14}, one of the following occurs, both leading to a contradiction, and we are done:

(i) $B$ is Morita equivalent to a block $C$ of $\cO H$ for a finite group $H$ with $H=H_0 \times H_1$ such that $H_0$ is abelian with Sylow $2$-subgroup $C_{2^n}$ and the block of $H_1$ covered by $C$ has defect groups $C_2 \times C_2$. In this case it follows from~\cite{li94} that $B$ is Morita equivalent to the principal block of $\cO D$, $\cO(C_{2^n} \times A_4)$ or $\cO(C_{2^n} \times A_5)$, a contradiction.

(ii) There is a finite group $H$ with $G \lhd H$ and $B$ is covered by a nilpotent block of $\cO H$. In this case by~\cite[4.3]{pu11} $B$ is Morita equivalent to the unique block of $N_G(D)$ with Brauer correspondent $B$, a contradiction by Lemma \ref{normaldefectlemma}.
\end{proof}

\begin{cor}
\label{cor:derived}
Let $G$ be a finite group and $B$ a block of $\mathcal{O}G$ with defect group $D\cong C_{2^n}\times C_2\times C_2$ for $n>1$. Let $b$ be the unique block of $N_G(D)$ with $b^G=B$. Then $B$ and $b$ are derived equivalent.
\end{cor}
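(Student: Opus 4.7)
By Theorem~\ref{thm:main}, $B$ is Morita equivalent to the principal block of one of $\mathcal{O}D$, $\mathcal{O}(C_{2^n}\times A_4)$ or $\mathcal{O}(C_{2^n}\times A_5)$. On the other side, since $D$ is a normal defect group of $b$, Lemma~\ref{normaldefectlemma} tells us that $b$ is Morita equivalent to $\mathcal{O}D$ or to $\mathcal{O}(C_{2^n}\times A_4)$, depending on whether the inertial quotient is trivial or $C_3$. Since the fusion systems of $B$ and $b$ coincide (they are Brauer correspondents), the inertial quotients agree. The plan is therefore to treat the two possible inertial quotients separately and in each case reduce the derived equivalence to a known one.

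If the inertial quotient is trivial, then both $B$ and $b$ are Morita equivalent to $\mathcal{O}D$, so $B$ and $b$ are Morita (hence derived) equivalent. If the inertial quotient is $C_3$ and $B$ is Morita equivalent to $\mathcal{O}(C_{2^n}\times A_4)$, the same argument applies: $b$ is also Morita equivalent to $\mathcal{O}(C_{2^n}\times A_4)$. Hence the only substantive case is the one in which $B \sim_{\operatorname{Mor}} \mathcal{O}(C_{2^n}\times A_5)$ while $b \sim_{\operatorname{Mor}} \mathcal{O}(C_{2^n}\times A_4)$, where we must genuinely invoke a derived equivalence rather than a Morita equivalence.

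For this last case the key input is Rickard's splendid derived equivalence between the principal blocks of $\mathcal{O}A_5$ and $\mathcal{O}A_4$ (the Brauer correspondent of the former in $N_{A_5}(V_4)=A_4$), which establishes Brou\'e's abelian defect group conjecture for the principal $2$-block of $A_5$. Tensoring a splendid tilting complex $X$ realising this equivalence with the identity bimodule on the block $\mathcal{O}C_{2^n}$ (which trivially has defect group $C_{2^n}$ and needs no non-trivial tilting) produces a tilting complex of $(\mathcal{O}(C_{2^n}\times A_5), \mathcal{O}(C_{2^n}\times A_4))$-bimodules, yielding a derived equivalence between $\mathcal{O}(C_{2^n}\times A_5)$ and $\mathcal{O}(C_{2^n}\times A_4)$. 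Composing with the Morita equivalences on either side then gives the required derived equivalence between $B$ and $b$.

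The only genuine difficulty is the observation in the last paragraph; the remaining steps are formal once Theorem~\ref{thm:main} and Lemma~\ref{normaldefectlemma} are in hand. No novel construction is required, since the $A_5 \leftrightarrow A_4$ derived equivalence is classical and behaves well under taking a direct product with a block having the cyclic factor $C_{2^n}$ as a normal defect group with trivial inertial quotient.
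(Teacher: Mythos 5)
Your proof is correct and takes essentially the same route as the paper: reduce via Theorem~\ref{thm:main} to the three Morita classes, observe that Rickard's splendid derived equivalence between the principal blocks of $\mathcal{O}A_4$ and $\mathcal{O}A_5$ tensors with $\mathcal{O}C_{2^n}$ to give one between $\mathcal{O}(C_{2^n}\times A_4)$ and $\mathcal{O}(C_{2^n}\times A_5)$, and conclude that $B$ and $b$ lie in the same derived class. You in fact supply slightly more detail than the paper — making explicit that $B$ and its Brauer correspondent $b$ share a fusion system, hence an inertial quotient, so that either both are nilpotent (and Morita equivalent to $\mathcal{O}D$) or both fall into the unique non-nilpotent derived class — which the paper leaves implicit in "there are only two derived equivalence classes ... and we are done."
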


\begin{proof}
By~\cite[\textsection 3]{ri96} the principal blocks of $\cO A_4$ and $\cO A_5$ are derived equivalent, and so the same is true of $\cO (C_{2^n} \times A_4)$ and $\cO (C_{2^n} \times A_5)$. Hence by Theorem \ref{thm:main} there are only two derived equivalence classes of blocks with defect group $C_{2^n}\times C_2\times C_2$, and we are done.
\end{proof}

\begin{cor}
\label{cor:derived16}
Let $G$ be a finite group and $B$ a block of $\mathcal{O}G$ with defect group $D$ of order dividing $16$. Let $b$ be the unique block of $N_G(D)$ with $b^G=B$. Then $B$ and $b$ are derived equivalent.
\end{cor}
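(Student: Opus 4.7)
The plan is to proceed by case analysis on the isomorphism type of the abelian $2$-group $D$. Since $|D|$ divides $16$, the possibilities are the cyclic groups $C_{2^m}$ with $0\le m\le 4$; the groups $C_{2^m}\times C_2$ with $m\in\{2,3\}$; the Klein four group $(C_2)^2$; the elementary abelian groups $(C_2)^3$ and $(C_2)^4$; the group $C_4\times C_4$; and the group $C_4\times C_2\times C_2$. These twelve isomorphism types will be dispatched by grouping them into four families, in each case reducing to a previously available result.

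When $D$ is cyclic, the derived equivalence of $B$ with its Brauer correspondent is Rickard's classical theorem on blocks of cyclic defect. When $D\cong C_{2^m}\times C_2$ with $m\ge 2$, the group $\Aut(D)$ is a $2$-group, so the inertial quotient of $B$ is trivial and $B$ is nilpotent; by Puig's structure theorem for nilpotent blocks, both $B$ and the Brauer correspondent $b$ (itself a nilpotent block with defect group $D$) are Morita equivalent to $\mathcal{O}D$, and hence derived equivalent.

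For the elementary abelian cases $(C_2)^2$, $(C_2)^3$ and $(C_2)^4$, and for $D\cong C_4\times C_4$, the abelian defect group conjecture is established in~\cite{ekks14}, and the sharper Morita classifications of~\cite{li94},~\cite{ea16} and~\cite{ea17} identify each such $B$ with one of an explicit short list of blocks whose derived equivalence with a Brauer correspondent is either recorded in these references or is immediate from~\cite[\textsection 3]{ri96}. The one remaining case $D\cong C_4\times C_2\times C_2$ is precisely the $n=2$ instance of Corollary~\ref{cor:derived}, which completes the argument. The proof therefore amounts to assembling prior work together with the $n=2$ application of the immediately preceding corollary; the only thing to check carefully is that the case list is exhaustive and that each cited result really yields a derived equivalence with the Brauer correspondent, so no substantive new obstacle arises.
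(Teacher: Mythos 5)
Your proposal is correct and follows essentially the same case analysis as the paper: partition the abelian $2$-groups of order dividing $16$ into those handled by prior Morita classifications (elementary abelian via \cite{li94}, \cite{ea16}, \cite{ea17}; $C_4\times C_4$ via \cite{ekks14}), the new case $C_4\times C_2\times C_2$ handled by Corollary~\ref{cor:derived} with $n=2$, and the remainder where the block is nilpotent. The one small deviation is that you invoke Rickard's cyclic-defect derived equivalence for $D$ cyclic, whereas the paper disposes of all of $C_4$, $C_8$, $C_{16}$, $C_4\times C_2$, $C_8\times C_2$ uniformly by the lighter observation that $\Aut(D)$ is a $2$-group (so $B$ and $b$ are both nilpotent, hence both Morita equivalent to $\mathcal{O}D$); for $p=2$ the cyclic case is already covered by this nilpotency argument and Rickard's theorem is more machinery than required, though of course it also works.
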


\begin{proof}
If $D$ is elementary abelian, then this is by~\cite{li94},~\cite{ea16} and~\cite{ea17}. If $D \cong C_4 \times C_4$, then see~\cite{ekks14} where it is shown that there are only two Morita equivalence classes. If $D \cong C_4 \times C_2 \times C_2$, then this is Corollary \ref{cor:derived}. In all other cases $\Aut(D)$ is a $2$-group and so all blocks with that defect group are nilpotent.
\end{proof}

\begin{cor}
\label{cor:derivedrank3}
Let $G$ be a finite group and $B$ a block of $\mathcal{O}G$ with defect group $D$ of $2$-rank at most three. Let $b$ be the unique block of $N_G(D)$ with $b^G=B$. Then $B$ and $b$ are derived equivalent.
\end{cor}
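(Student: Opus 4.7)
The plan is to invoke the preceding Corollary, which classifies blocks with defect group of $2$-rank at most three into eight Morita equivalence classes, and to verify derived equivalence between each representative and its Brauer correspondent in $N_G(D)$. For cases (i), (ii), (iv) and (vi), namely the principal blocks of $\cO D$, $\cO(D \rtimes C_3)$, $\cO(D \rtimes C_7)$ and $\cO(D \rtimes (C_7 \rtimes C_3))$, the defect group is already normal in the representative group; the Brauer correspondent of $B$ in $N_G(D)$ has the same normal defect group and the same inertial quotient, and so by K\"ulshammer's classification~\cite{ku85} is Morita, and hence derived, equivalent to the same representative.

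Case (iii) parallels the proof of Corollary \ref{cor:derived}: Rickard's derived equivalence from~\cite{ri96} between the principal blocks of $\mathcal{O}A_4$ and $\mathcal{O}A_5$, tensored with the identity on $\mathcal{O}C_{2^n}$, yields a derived equivalence between $\cO(C_{2^n}\times A_4)$ and $\cO(C_{2^n}\times A_5)$, and $\cO(C_{2^n}\times A_4)$ is Morita equivalent to the Brauer correspondent of the $\cO(C_{2^n}\times A_5)$-block by Lemma \ref{normaldefectlemma}.

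The remaining cases (v), (vii) and (viii) correspond to the principal blocks of $\cO SL_2(8)$, $\cO J_1$ and $\cO \Aut(SL_2(8))$, each with defect group $(C_2)^3$. Here I would appeal to the known derived equivalences between these principal blocks and their respective Brauer correspondents in the normalizer of a Sylow $2$-subgroup, established in the literature through the work of Okuyama, Gollan--Okuyama, Holloway and others. The main obstacle is simply collecting the correct references for these three sporadic cases and verifying that each equivalence holds over $\mathcal{O}$ rather than merely over the residue field $k$; once this is done, combining all cases completes the proof.
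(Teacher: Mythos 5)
The paper's proof is a one-liner: it covers the rank-$\leq 3$ abelian defect groups by citing Corollary \ref{cor:derived} (for $C_{2^n}\times C_2\times C_2$, $n>1$), Corollary \ref{cor:derived16} (for $|D|$ dividing $16$), \cite[1.1]{ekks14} and \cite{wzz17} (for the remaining homocyclic rank-$2$ and rank-$3$ types), with all other abelian groups of rank $\leq 3$ having $\Aut(D)$ a $2$-group so that the block is nilpotent. Your approach is genuinely different: you route through the eight-fold Morita classification from the introduction and then verify Brou\'e for each representative. This is a legitimate strategy, and your treatment of cases (i)--(iv) and (vi) via K\"ulshammer's theorem and Rickard's derived equivalence is essentially sound (though you should remark that the K\"ulshammer $2$-cocycle in $H^2(E,k^\times)$ vanishes, which holds here because the inertial quotients $1$, $C_3$, $C_7$, $C_7\rtimes C_3$ all have trivial Schur multiplier, and also that the inertial quotient of $b$ agrees with that of $B$ because the fusion system is a Morita invariant for blocks with abelian defect group).

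However, your proposal has a real gap in cases (v), (vii) and (viii): you defer these to ``collecting the correct references'' for $SL_2(8)$, $J_1$ and $\Aut(SL_2(8))$ and verifying that the equivalences lift from $k$ to $\cO$. This is left undone, and more to the point it is unnecessary. All three of these representatives occur for the single defect group $(C_2)^3$, of order $8$, which is already handled by Corollary \ref{cor:derived16} proved immediately above. That corollary also already subsumes cases (i)--(iv) and (vi) whenever $|D|\leq 16$. So the right move is to split by the isomorphism type of $D$ rather than by the Morita representative: once you observe this, the whole argument collapses to the paper's short citation of the two preceding corollaries together with \cite[1.1]{ekks14} and \cite{wzz17} for the defect group types of order $>16$ not of the form $C_{2^n}\times C_2\times C_2$.
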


\begin{proof}
This follows immediately from the above corollaries,~\cite[1.1]{ekks14} and~\cite{wzz17}.
\end{proof}


\begin{thebibliography}{99}

\bibitem{alp86} J.~L.~Alperin, \emph{Local representation theory}, Cambridge University Press (1986).

\bibitem{asc00} M.~Aschbacher, \emph{Finite group theory}, Cambridge Studies in Advanced Mathematics {\bf10}, Cambridge university Press (1986).

\bibitem{ako} M.~Aschbacher, R.~Kessar and R.~Oliver, \emph{Fusion systems in algebra and topolgy}, London Mathematical Society Lecture Note Series \textbf{391}, Cambridge University Press (2011).

\bibitem{br90} M.~Brou\'e, \emph{Isom\'etries parfaites, types de blocs, cat\'egories d\'eriv\'ees}, Ast\'erisque {\bf181-182} (1990), 61--92.

\bibitem{bp80} M. Brou\'e and L. Puig, \emph{Characters and local structure in $G$-algebras}, J. Algebra {\bf 63} (1980), 306--317.

\bibitem{da73} E.~C.~Dade, \emph{Block extensions}, Ill. J. Math. {\bf17} (1973), 198-272.

\bibitem{ea16} C.~W.~Eaton, \emph{Morita equivalence classes of $2$-blocks of defect three}, Proc. AMS {\bf144} (2016), 1961--1970.

\bibitem{ea17} C.~W.~Eaton, \emph{Morita equivalence classes of blocks with elementary abelian defect groups of order $16$}, preprint (2017).

\bibitem{ekks14} C.~W.~Eaton, R.~Kessar, B.~K\"{u}lshammer and B.~Sambale, {\em $2$-blocks with abelian defect groups}, Adv. Math. {\bf254} (2014), 706-735.

\bibitem{eatliv17} C.~Eaton and M.~Livesey, \emph{Loewy lengths of blocks with abelian defect groups}, Proceedings of the American Mathematical Society, Series B

\bibitem{feit} W.~Feit, \emph{The representation theory of finite groups}, North-Holland Mathematical Library Volume 25 (1982).

\bibitem{ke05} R.~Kessar, \emph{A remark on Donovan's conjecture}, Arch. Math (Basel) {\bf82} (2005), 391--394.


\bibitem{kkl12} R.~Kessar, S.~Koshitani and M.~Linckelmann, \emph{Conjectures of
Alperin and Brou\'e for $2$-blocks with elementary abelian defect groups of order $8$}, J. Reine Angew. Math. \textbf{671} (2012), 85--130.

\bibitem{km13} R.~Kessar and G.~Malle, \emph{On quasi-isolated blocks and Brauer's height zero conjecture}, Ann. Math. (2) {\bf178} (2013), 321--384.

\bibitem{kk96} S.~Koshitani and B.~K\"ulshammer, \emph{A splitting theorem for blocks}, Osaka J. Math. \textbf{33} (1996), 343--346.

\bibitem{ku85} B.~K\"ulshammer, \emph{Crossed products and blocks with normal defect groups}, Comm. Alg. {\bf13} (1985), 147--168.

\bibitem{kp90} B.~K\"ulshammer and L.~Puig, \emph{Extensions of nilpotent blocks}, Invent. Math. \textbf{102} (1990), 17--71.

\bibitem{ks13} B.~K\"ulshammer and B.~Sambale, \emph{The $2$-blocks of defect $4$}, Rerpresentation Theory \textbf{17} (2013), 226--236.

\bibitem{li94} M.~Linckelmann, \emph{The source algebras of blocks with a Klein four defect group}, J. Algebra \textbf{167} (1994), 821--854.


\bibitem{mar96} A.~Marcus, \emph{On equivalences between blocks of group algebras: Reduction to the simple components}, J. Algebra (2) {\bf184} (1996), 372--396.

\bibitem{na98} G.~Navarro, \emph{Characters and blocks of finite groups}, London Mathematical Society Lecture Note Series \textbf{250}, Cambridge University Press (1998).

\bibitem{ou95} J.~B.~Olsson and K.~Uno, \emph{Dade's conjecture for symmetric groups}, J. Algebra {\bf176} (1995), 534--560.

\bibitem{pu88} L.~Puig, \emph{Nilpotent blocks and their source algebras}, Invent. Math. \textbf{93} (1988), 77--116.

\bibitem{pu11} L.~Puig, \emph{Nilpotent extensions of blocks}, Math. Z. \textbf{269} (2011), 115-136.

\bibitem{ri96} J.~Rickard, \emph{Splendid equivalences: derived categories and permutation modules}, Proc. London Math. Soc. {\bf72} (1996), 331--358.

\bibitem{ro92} G.~R.~Robinson, \emph{On Brauer's $k(B)$ problem}, J. Algebra {\bf147} (1992), 450--455.

\bibitem{sambale} B.~Sambale, \emph{Blocks of finite groups and their invariants}, Lecture Notes in Mathematics {\bf2127}, Springer (2014).

\bibitem{sa17} B.~Sambale, \emph{Cartan matrices and Brauer's $k(B)$-conjecture IV}, J. Math. Soc. Japan {\bf69} (2017), 735--754.

\bibitem{wa94} A.~Watanabe, \emph{On nilpotent blocks of finite groups}, J. Algebra \textbf{163} (1994), 128--134.

\bibitem{wa05} A.~Watanabe, \emph{On perfect isometries for blocks with abelian defect groups and cyclic hyperfocal subgroups}, Kumamoto J. Math. {\bf18} (2005), 85-92.

\bibitem{wzz17} C.~Wu, K.~Zhang and Y.~Zhou, \emph{Blocks with defect group $\mathbb{Z}_{2^n} \times \mathbb{Z}_{2^n} \times 
\mathbb{Z}_{2^m}$}, preprint (2017).
\end{thebibliography}
\end{document}